\theoremstyle{definition}
\newtheorem{theorem}{Theorem}[section]
 \newtheorem{lemma}{Lemma}[section]
\newtheorem{rem}{Remark}[section]
\newtheorem{Corollary}{Corollary}[section]
 \newcommand{\ep}{\epsilon}
 \newcommand{\R}{\mathbb{R}}
  \newcommand{\cP}{\mathcal{P}}
  \newcommand{\ra}{\rightarrow}
 \newcommand{\dt}{\delta}
\begin{document}

\author[A. Liu]{Anning Liu }
\address[A. Liu]{Department of Mathematical Sciences, Tsinghua University, Beijing 100084, People's Republic
of China}
\email{lan15@mails.tsinghua.edu.cn}

\author[J. G. Liu]{Jian-Guo Liu}
\address[J. G. Liu]{Department of Mathematics and Department of Physics, Duke University, Durham NC 27708, USA}
\email{jliu@phy.duke.edu}

\author[Y. Lu]{Yulong Lu}
\address[Y. Lu]{Department of Mathematics, Duke University, Durham NC 27708, USA}
\email{yulonglu@math.duke.edu}

\title[Rate of convergence of empirical measure in $\infty$-Wasserstein distance]{On the rate of convergence of empirical measure in $\infty-$Wasserstein distance for unbounded density function}
\maketitle

\begin{abstract}
 We consider a sequence of identically independently distributed random samples from an absolutely continuous  probability measure in one dimension with unbounded density. We establish a new rate of convergence of the $\infty-$Wasserstein distance between the empirical measure of the samples and the true distribution, which extends the previous convergence result by Trilllos and Slep\v{c}ev to the case that the true distribution has an unbounded density.
\end{abstract}

\section{Introduction}

Consider a sequence of identically independently distributed (i.i.d.) random variables $\{X_i\}, i = 1,\cdots,n,$ sampled from a given probability measure $\nu \in \cP(\R^d)$ with probability density function $\rho$. Here $\cP(\R^d)$ denotes the space of all probability measures on $\R^d$. We define the empirical measure $\nu_n$ associated to  the samples $\{X_i\}$ by
$$\nu_n := \frac{1}{n}\sum^n_{i=1} \delta_{X_i}.$$
The well-known Glivenko--Cantelli theorem \cite{van1996weak} states that $\nu_n$ converges weakly to $ \nu$ as $n\ra \infty$. In recent years,
there has been growing interest in quantifying the rate of convergence of $\nu_n$ to $\nu$ with respect to Wasserstein distances. Recall that the $p$-Wasserstein distance between two probability measures $\mu, \nu \in \cP(\R^d)$ is defined as
$$
W_p(\mu, \nu) := \Big(\inf_{\gamma \in \Gamma(\mu, \nu)} \int_{\R^d \times \R^d} |x - y|^p \gamma(dx, dy)\Big)^{1/p}, \quad 1 \leq p < \infty
$$
and
$$W_{\infty}(\mu,\nu) := \inf_{\pi \in \Pi(\mu,\nu)}\textrm{esssup}_{\pi} | x  -y |,$$
where $ \Gamma(\mu, \nu)$ is the set of all probability measures on $\R^d \times \R^d$ with two marginals $\mu$ and $\nu$.

The purpose of this paper is to prove the rate of convergence of $\nu_n$ to $\nu$  w.r.t. $\infty$-Wasserstein distance $W_\infty$ when the density function $\rho$ of $\nu$ is unbounded. For simplicity, we will focus on the one dimensional case, but the arguments of the proof are expected to be generalized to high dimensions.

\bigskip

 \subsection{Motivation and Related Work}

Estimating the distance between the empirical measure
of a sequence of i.i.d. random variables and its true distribution  is a highly important problem in probability and statistics. For example, in statistics, it is usually impossible to access to the true distribution, e.g. the posterior distribution in a Bayesian procedure. So in order to extract useful information from the true distribution, a common approach is to generate i.i.d samples from the true distribution via various sampling algorithms (Markov chain Monte Carlo for instance), from which one can approximately
compute many statistical quantities of interest, such as the mean or variance by their empirical counterparts. Hence understanding the statistical error in estimating the statistics requires a quantification of the distance between the empirical measure and the true distribution.

The Wasserstein distance is a natural choice for measuring the closeness of two probability measures in the problem of consideration since it allows the probability measures to be singular to each other, which typically allows including Dirac masses or the empirical measures. This is prohibited if total variation distance or Hellinger distance\cite{gibbs2002choosing} are used. We are particularly interested in the $\infty$-Wasserstein distance for several reasons.  First, the $\infty$-Wasserstein distance $W_\infty(\mu, \nu)$ reduces to the so-called min-max matching distance \cite{Ajtai1984, barthe2013combinatorial,Leighton1989}  when both $\mu$ and $\nu$ are discrete measures with the same number of Diracs.
Such min-max matching distance plays an important role  in the analysis of shape matching problems in computer vision; see \cite{d2006using} and the references therein. Moreover, the $\infty$-Wasserstein distance is also useful in understanding the asymptotic performance of spectral clustering \cite{trillos2016variational, trillos2014rate}. In fact, in \cite{trillos2016variational}, the authors studied the consistency of spectral clustering algorithms in the large graph limit. By formulating the clustering procedure in a variational framework, they characterized  the convergence of eigenvalues, eigenvectors of a weighted graph Laplacian, and that of spectral clustering to their underlying continuum limits using $\Gamma$-convergence. One crucial ingredient needed in their proof is exactly a convergence rate estimate on the $\infty$-Wasserstein distance between the empirical measures and the true distribution which was established in \cite{trillos2014rate}. However, they made a strong assumption that the density function of  the true distribution is strictly bounded from above and below. We
aim to extend the result in \cite{trillos2014rate} to the case where the true distribution has an unbounded density in one dimensional space.

Let us briefly review some important previous works on the rate of convergence of $W_p(\nu_n, \nu)$ with $p \geq 1$.
For $p=1$, it was shown by Dudley  in \cite{dudley1969speed} that when $d \geq 2$,
$$C_2\cdot n^{-\frac{1}{d}} \leq \mathbb{E}(W_1(\nu,\nu_n)) \leq C_1\cdot n^{-\frac{1}{d}}. $$
Based on Sanov's theorem, Bolley, Guillin and Villani \cite{bolley2007quantitative} proved  a concentration estimate on $W_p(\nu_n, \nu)$ for $1\leq p \leq 2$  in any dimension
$$\mathbb{P}\big(W_p(\nu_n,\nu)\geq t\big) \leq C\cdot e^{-Knt^2}.$$
Boissard \cite{boissard2011simple} extended this result  to more general spaces rather than $\mathbb{R}^d$  when $p=1$ and applied it to  the occupation measure of a Markov chain.
In \cite{boissard2014mean}, Boissard and Gouic  gave the rate of convergence for $\mathbb{E}(W_p(\nu_n,\nu)^p)$ when $1 \leq p < \infty$.
Fournier and Guillin  \cite{fournier2015rate} presented a better result than \cite{bolley2007quantitative,boissard2011simple} for non-asymptotic moment estimates  and concentration estimates. They showed that if $\nu$ has finite $q$-th moment and $p < \frac{d}{2}$, then
 \begin{equation*}
   \mathbb{E}(W^p_p(\nu,\nu_n)) \leq C_{q,p}\cdot n^{-\frac{p}{d}},
 \end{equation*}
 and
 \begin{equation*}
   \mathbb{P}\big(W_p(\nu_n,\nu)\geq t\big) \leq C\cdot \exp(-Knt^{\frac{d}{p}}).
 \end{equation*}
 (We only list the case $p < \frac{d}{2}$ here. For other cases, one can refer to Theorem 1 and 2 in \cite{fournier2015rate}.)
 Weed and Bach gave a new definition of the upper Wasserstein dimension
 $d^{*}(\nu)$ for measure $\nu$. They proved that for $1 \leq p < \infty$ and $s < d^{*}(\nu)$,
 \begin{equation*}
   \mathbb{E}(W_p(\nu,\nu_n)) \leq C \cdot n^{-\frac{1}{s}}.
 \end{equation*}

As for $W_{\infty}(\nu,\nu_n)$, its rate of convergence is less studied  than that of $W_{p}(\nu,\nu_n)$ with $p < \infty$. As far as we know, most results on $W_{\infty}(\nu,\nu_n)$ are obtained when $\nu$ and $\nu_n$ are both discrete measures.   As mentioned above, the $\infty-$ Wasserstein distance between two discrete measures is closely linked to the min-max matching problem. Many results have been obtained for the latter when $\nu$ is a uniform distribution. Let $S = [0,1]^d$. Define a regularly spaced array of $n$ grid points on S (with $n = k^d$ for some $k\in \mathbb{N}$ ) by $Y_i$ and the i.i.d. random samples with uniform distribution on S by $X_i$.  Leighton and Shor \cite{Leighton1989}, and Yukich and Shor \cite{shor1991} showed that as $n \rightarrow \infty$,  it holds with high probability that
\begin{equation}\nonumber
 \min_{\pi}\max_i |X_{\pi_i} - Y_i| \sim
 \left\{
\begin{aligned}
   & O\left(\frac{(\log n)^{\frac{3}{4}}}{n^{\frac12}}\right), ~~~&d = 2,\\
   & O\left(\left(\frac{\log n}{n}\right)^{\frac1d}\right),~~~ &d \geq 3,
\end{aligned}
\right.
\end{equation}
where $\pi$ is a permutation of $\{1,2,\cdots,n\}$.
Trillos and Slep\v{c}ev \cite{trillos2014rate}  proved that the above estimate still holds when the underlying measure $\nu$ has a strictly positive and bounded density.

\bigskip

\subsection{Main Results}
 The purpose of this paper is to improve the results of \cite{trillos2014rate} in 1-D by  removing the boundedness constraint on $\rho(x)$. Our first result is a rate of convergence result in the case where the density function $\rho(x)$ is bounded from below, but not from above.

\bigskip

\begin{theorem}\label{w_1}
Let $D = (0,1) \subseteq \mathbb{R}$ and $\nu$ be a probability measure in D with a density function $\rho: D \rightarrow (0,\infty)$. Assume that there exists a constant $\lambda \in (0,1)$ such that
\begin{equation}\nonumber
\rho(x) \geq \lambda,~~\forall~~x\in D.
\end{equation}
Let $X_1,\cdots,X_n,\cdots$ be i.i.d. random variables sampled from $\nu$ and let $\nu_n$ be the corresponding empirical measure.
Then for any $ t > 0$,
\begin{equation*}
  \mathbb{P}\left(W_{\infty}(\nu,\nu_n)\geq \frac{t}{\lambda}\right) \leq 2\exp(-2nt^2).
\end{equation*}
In particular, for any constant $M > 1 $, except on a set with probability $\frac{1}{M}$,
\begin{equation}\label{result_w_1}
  W_{\infty}(\nu,\nu_n)\leq \frac{1}{\lambda} \cdot \left(\frac{\log(2M)}{2n}\right)^{\frac12}.
\end{equation}
\end{theorem}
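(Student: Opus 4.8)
The plan is to reduce the entire statement to the classical Dvoretzky--Kiefer--Wolfowitz (DKW) inequality for the Kolmogorov--Smirnov statistic by means of a purely deterministic comparison estimate. Write $F(x):=\nu((0,x])=\int_0^x\rho$ for the cumulative distribution function of $\nu$ and $F_n(x):=\nu_n((0,x])=\frac1n\#\{i:X_i\le x\}$ for the empirical one, and set $D_n:=\sup_{x\in D}|F(x)-F_n(x)|$. The first step is to prove that, almost surely,
\begin{equation}\label{plan-det}
W_\infty(\nu,\nu_n)\le \tfrac1\lambda\,D_n .
\end{equation}
Granting this, the DKW inequality with Massart's sharp constant gives $\mathbb{P}(D_n\ge t)\le 2e^{-2nt^2}$ for every $t>0$, and since the event $\{W_\infty(\nu,\nu_n)\ge t/\lambda\}$ is contained, up to a null set, in $\{D_n\ge t\}$, the stated probability bound follows immediately; the ``in particular'' assertion \eqref{result_w_1} then results from solving $2e^{-2nt^2}=1/M$ for $t$.

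The heart of the matter is therefore \eqref{plan-det}, and this is where the unboundedness of $\rho$ turns out to be harmless: only the \emph{lower} bound on $\rho$ enters. From $\rho\ge\lambda$ one has $F(b)-F(a)\ge\lambda(b-a)$ for $0\le a\le b\le1$, so $F$ is a strictly increasing, continuous bijection of $[0,1]$ onto $[0,1]$ whose inverse $F^{-1}$ is $\frac1\lambda$-Lipschitz. To prove \eqref{plan-det} I would exhibit an explicit coupling: let $X_{(1)}\le\cdots\le X_{(n)}$ be the order statistics, let $I_i:=\big(F^{-1}(\tfrac{i-1}{n}),F^{-1}(\tfrac{i}{n})\big)$, which are disjoint with $\nu(I_i)=\tfrac1n$ and $\nu(\bigcup_iI_i)=1$, and put $\pi:=\sum_{i=1}^n \nu|_{I_i}\otimes\delta_{X_{(i)}}$, a coupling of $\nu$ and $\nu_n$. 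It then suffices to show $|x-X_{(i)}|\le\frac1\lambda D_n$ for every $x\in I_i$, and by the Lipschitz bound on $F^{-1}$ this reduces to $|F(x)-F(X_{(i)})|\le D_n$. This last inequality is a short case analysis on the sign of $x-X_{(i)}$: if $x\le X_{(i)}$, then for $z\in[x,X_{(i)})$ at most $i-1$ samples lie at or below $z$, so $F_n(z)\le\frac{i-1}{n}<F(x)\le F(z)$, whence $0<F(z)-F_n(z)\le D_n$ forces $F(z)\le\frac{i-1}{n}+D_n$; letting $z\uparrow X_{(i)}$ (using continuity of $F$) and recalling $F(x)>\frac{i-1}{n}$ gives $0\le F(X_{(i)})-F(x)\le D_n$. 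The case $x>X_{(i)}$ is symmetric, using that at least $i$ samples lie at or below any $z\in[X_{(i)},x]$, so $F_n(z)\ge\frac{i}{n}>F(z)$ and $F(z)\ge\frac in-D_n$.

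I expect \eqref{plan-det} to be the only genuinely nontrivial ingredient; once it is established, the probabilistic part is a direct invocation of the DKW--Massart inequality together with elementary algebra. As an alternative to the explicit coupling, one may instead use the one-dimensional identity $W_\infty(\nu,\nu_n)=\|F^{-1}-F_n^{-1}\|_{L^\infty(0,1)}$ between quantile functions and combine the $\frac1\lambda$-Lipschitz bound on $F^{-1}$ with the standard estimate $\|F\circ F_n^{-1}-\mathrm{id}\|_{L^\infty(0,1)}\le D_n$; I would nonetheless prefer the coupling argument above since it is self-contained and keeps the role of the lower bound $\lambda$ fully transparent.
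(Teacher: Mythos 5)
Your argument is correct, and at the top level it is the same as the paper's: both reduce the theorem to the Dvoretzky--Kiefer--Wolfowitz bound on $D_n=\sup_x|F_n(x)-F(x)|$ via the deterministic inequality $W_\infty(\nu,\nu_n)\le\lambda^{-1}D_n$, which is where the lower bound $\rho\ge\lambda$ (and only the lower bound) enters. The two proofs differ in how this deterministic inequality is obtained. The paper quotes the one-dimensional quantile identity $W_\infty(\mu,\nu)=\|F^{-1}-G^{-1}\|_{L^\infty}$ (Lemma \ref{lemma_quan}) and then checks that $\sup_x|F_n(x)-F(x)|\le t$ forces $\sup_y|F_n^{-1}(y)-F^{-1}(y)|\le t/\lambda$, using $F_n(x_1)=y=F(x_2)$ together with $|F(x_1)-F(x_2)|\ge\lambda|x_1-x_2|$ --- essentially the ``alternative'' route you mention at the end. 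You instead exhibit the monotone coupling $\pi=\sum_i\nu|_{I_i}\otimes\delta_{X_{(i)}}$ with $I_i=\big(F^{-1}(\frac{i-1}{n}),F^{-1}(\frac{i}{n})\big)$ and verify $|x-X_{(i)}|\le\lambda^{-1}D_n$ on each $I_i$ by a case analysis on the empirical CDF near $X_{(i)}$; this is slightly longer but self-contained, since it uses only that $\pi$ is an admissible coupling rather than the (nontrivial) quantile characterization of $W_\infty$, and it sidesteps any care needed in interpreting the generalized inverse of the step function $F_n$. Your case analysis is sound (strict monotonicity and continuity of $F$, guaranteed by $\rho\ge\lambda>0$, legitimize the inversion and the limit $z\uparrow X_{(i)}$, and ties among the samples occur with probability zero), and the final algebra recovering \eqref{result_w_1} from $2e^{-2nt^2}=1/M$ matches the paper.
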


\bigskip

\begin{rem}
Note that the right hand side of (\ref{result_w_1}) will blow up if $\lambda \rightarrow 0$. That's why we assume that $\rho(x)$ has a uniform positive lower bound in Theorem \ref{w_1}.
Moreover, the exponent one half is sharp owing to the central limit theorem. \end{rem}

\bigskip

We proceed to discussing the case when the density function is not strictly bounded away from zero. We first comment that if the density function of $\nu$ is zero in a connected region, then by definition the $\infty$-Wasserstein distance between $\nu_n$ and $\nu$ can not go to zero as $n$ goes to infinity. In fact, consider the probability measure $\nu_0$ with the density function
\begin{equation}\nonumber
\rho_0(x) = \left\{
\begin{aligned}
  \frac32,&~~~x\in \left(0,\frac13\right)\bigcup \left(\frac23,1\right),\\
  0,&~~~x\in\left[\frac13,\frac23\right].
\end{aligned}
\right.
\end{equation}
Let $\nu_{n,0} $ be the empirical measure of $\nu_0$.
Since $\nu_{n,0}$ depends on a sequence  of random variables, there is no guarantee that $\nu_{n,0}((0,\frac13)) = \nu_0((0,\frac13)).$
Assume that $\nu_{n,0}((0,\frac13)) = \nu_0((0,\frac13)) + \delta_n $, where $1 \gg \delta_n > 0$ is a small parameter. Since $W_{\infty}(\nu_{n,0},\nu_n)$ is also the maximal distance that an optimal transportation map from $\nu_{n,0}$ to $\nu_0$ moves the mass by (which will be mentioned later in Lemma \ref{Anoth_def}), it follows that $$W_{\infty}(\nu_{n,0},\nu_0) \geq \textrm{diam}\left(\left(\frac13,\frac23\right)\right) = \frac13 > 0.$$

Therefore, in Theorem \ref{w_2} below, we assume that $\rho(x)$ only has a finite number of zero points.

\bigskip

\begin{theorem}\label{w_2}
Let $D = (0,1) \subseteq \mathbb{R}$ and $\nu$ be a probability measure in D with a density function $\rho: D \rightarrow (0,\infty)$.
Assume that there exists a constant $\Lambda > 0$ such that for all $x \in D$,
\begin{equation*}
\rho(x) \leq \Lambda.
\end{equation*}
Suppose additionally that there are only N points $x_1,\cdots,x_N$ satisfying  $\rho(x_i) = 0$. For each $x_i$, further assume that
   \begin{equation}\label{control_rho}
     \underline{C_i}|x_i - x|^{k_i} \leq \rho(x) \leq \overline{C_i}|x_i - x|^{k_i},~~\textrm{for}~~\forall~~x\in B_i,
   \end{equation}
where $B_i = (x_i - \Delta_i, x_i + \Delta_i)$ is a small neighborhood of $x_i$ and $ \Delta_i, k_i, \overline{C_i}, \underline{C_i}$ are positive numbers, $k_i \in \mathbb{Z}$.
Let $X_1,\cdots,X_n,\cdots$ be i.i.d. random variables sampled from $\nu$ and let $\nu_n$ be the corresponding empirical measure.
Then there exists a positive constant $C = C ( k_i, \underline{C_i} )$ such that except on a set with probability $O\left(\frac{1}{\log n}\right),$
$$W_\infty(\nu,\nu_n) \leq C\cdot\max_i \Big(\frac{\log n }{n}\Big)^{\frac{1}{2(k_i+1)}}.$$
\end{theorem}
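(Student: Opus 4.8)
The plan is to reduce the estimate to a one–dimensional comparison of distribution functions, and then to a purely deterministic lower bound on the $\nu$–mass of short intervals; it is in the latter step that hypothesis (\ref{control_rho}) is used. Let $F(x)=\nu((-\infty,x])$ and $F_n(x)=\nu_n((-\infty,x])$. In one dimension the monotone rearrangement is an optimal transport map, so by Lemma \ref{Anoth_def} we have $W_\infty(\nu,\nu_n)\le\delta$ if and only if this map moves no mass by more than $\delta$, which — since it sends $F^{-1}(t)$ to $F_n^{-1}(t)$ — is in turn equivalent to
\[
F(x-\delta)\le F_n(x)\le F(x+\delta)\qquad\text{for all }x\in\R .
\]
Since the samples lie in $(0,1)$, $F_n\equiv 0$ on $(-\infty,0)$ and $F_n\equiv 1$ on $(1,\infty)$, so both inequalities hold trivially off $(0,1)$; on the event $\mathcal{E}_r:=\{\sup_x|F_n(x)-F(x)|\le r\}$ they follow, for $x\in(0,1)$, once one knows that $\int_x^{x+\delta}\rho(s)\,ds\ge r$ for every $x\in(0,1-\delta)$ (after the shift $x\mapsto x-\delta$ the lower inequality becomes the same requirement as the upper one).

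For the probabilistic input, a union bound over the $n$ uniform order statistics $U_{(i)}=F(X_{(i)})$ together with Hoeffding's inequality gives $\mathbb{P}(\mathcal{E}_r^c)\lesssim n\exp(-2nr^2)$ — this is where the $\log n$ enters — while the Dvoretzky--Kiefer--Wolfowitz inequality would give the sharper $\mathbb{P}(\mathcal{E}_r^c)\le 2\exp(-2nr^2)$. In either case, choosing $r=r_n:=(\log n/n)^{1/2}$ makes $\mathbb{P}(\mathcal{E}_{r_n}^c)=O(1/\log n)$, so it remains only to establish, with $\delta=\delta_n:=C\max_i(\log n/n)^{1/(2(k_i+1))}$, the deterministic bound
\[
\inf_{x\in(0,1-\delta_n)}\ \int_x^{x+\delta_n}\rho(s)\,ds\ \ge\ r_n .
\]

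This deterministic bound is the heart of the matter. Off the neighborhoods $B_i$, the density is bounded below by some $\lambda_0>0$ (here one uses that $x_1,\dots,x_N$ are the only zeros of $\rho$), so any window $[x,x+\delta_n]$ disjoint from $\bigcup_i B_i$ contributes at least $\lambda_0\delta_n\ge r_n$ as soon as $C\lambda_0\ge 1$. If the window meets some $B_i$, take $n$ large enough that $\delta_n<\Delta_i$; writing the relevant left endpoint as $x=x_i-a$ with $0\le a\le\delta_n$, hypothesis (\ref{control_rho}) gives
\[
\int_x^{x+\delta_n}\rho\ \ge\ \underline{C_i}\int_{-a}^{\delta_n-a}|u|^{k_i}\,du\ =\ \frac{\underline{C_i}}{k_i+1}\bigl(a^{k_i+1}+(\delta_n-a)^{k_i+1}\bigr)\ \ge\ \frac{\underline{C_i}}{(k_i+1)\,2^{k_i}}\,\delta_n^{\,k_i+1},
\]
the last step by minimising over $a\in[0,\delta_n]$ the convex function $a\mapsto a^{k_i+1}+(\delta_n-a)^{k_i+1}$ (its minimum is at $a=\delta_n/2$); a window that straddles the boundary of $B_i$ is handled by adding this estimate on the part inside $B_i$ to the $\lambda_0\times(\text{length})$ estimate on the complementary part. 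Therefore $\inf_x\int_x^{x+\delta_n}\rho\ge\min\{\lambda_0\delta_n,\ \min_i\frac{\underline{C_i}}{(k_i+1)2^{k_i}}\delta_n^{k_i+1}\}$, and since $\delta_n^{k_i+1}\ge C^{k_i+1}(\log n/n)^{1/2}$ by the definition of $\delta_n$, this last quantity exceeds $r_n=(\log n/n)^{1/2}$ once the constant $C=C(k_i,\underline{C_i},\lambda_0,\dots)$ is chosen large enough. Feeding this back through the first two paragraphs yields $W_\infty(\nu,\nu_n)\le\delta_n$ off a set of probability $O(1/\log n)$, which is the assertion of the theorem.

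I expect the one genuinely delicate point to be this deterministic interval estimate: one must bound $\int_x^{x+\delta_n}\rho$ from below uniformly in the position of the window, in particular handling windows that straddle the boundary of some $B_i$ (where the bound degrades from $\lambda_0\delta_n$ to the much smaller $\delta_n^{k_i+1}$) and windows centred near a zero, while keeping careful track of which quantities the constant $C$ is permitted to depend on — notably the off–zeros lower bound $\lambda_0$, whose positivity should itself be deduced from the standing hypotheses. A conceptually equivalent route, which has the advantage of reusing Theorem \ref{w_1} essentially as a black box, is the following: near each $x_i$ perform the change of variables $y=\mathrm{sgn}(x-x_i)\,|x-x_i|^{k_i+1}$, under which (\ref{control_rho}) turns $\rho$ into a density bounded both above and below; apply Theorem \ref{w_1} in the $y$–variable, with the number of samples that landed in $B_i$ (which concentrates around $n\,\nu(B_i)$) playing the role of the sample size; then pull back using that $y\mapsto\mathrm{sgn}(y)\,|y|^{1/(k_i+1)}$ is $\tfrac{1}{k_i+1}$–Hölder continuous, so that $W_\infty$ on the $x$–side is at most a constant multiple of the $\tfrac{1}{k_i+1}$ power of $W_\infty$ on the $y$–side, the latter being of order $(\log n/n)^{1/(2(k_i+1))}$ once the Hölder exponent is taken into account; the region away from $\bigcup_i B_i$ is covered by Theorem \ref{w_1} directly.
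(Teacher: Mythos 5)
Your argument is correct, and it takes a genuinely different and considerably more elementary route than the paper. The paper's proof is a multi-scale adaptation of Trillos--Slep\v{c}ev: it partitions $D$ into the neighborhoods $B_i$ and their complement, introduces two auxiliary measures $\widetilde{\nu}$ and $\overline{\nu}$ to fix up mass discrepancies, further slices each $B_i$ into level sets $A_j$ of $\rho$, rescales so the density is bounded on each $A_j$, and then runs a dyadic-bisection argument with Bernstein, Chernoff and Chebyshev inequalities. You instead push the quantile-function mechanism of Theorem \ref{w_1} all the way: the identity $W_\infty=\|F^{-1}-F_n^{-1}\|_{L^\infty}$ reduces the claim to the sandwich $F(x-\delta)\le F_n(x)\le F(x+\delta)$, which on the DKW event $\sup_x|F_n-F|\le r_n$ follows from the purely deterministic bound $\inf_x\int_x^{x+\delta_n}\rho\ge r_n$; the hypothesis \eqref{control_rho} enters only through the elementary minimization of $a\mapsto a^{k_i+1}+(\delta_n-a)^{k_i+1}$. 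What your route buys is a single application of DKW in place of the union over dyadic scales and level sets, a cleaner identification of where each hypothesis is used, and in fact a much stronger probability bound ($2n^{-2}$ rather than $O(1/\log n)$); what it gives up is any prospect of generalizing to $d\ge 2$, where the quantile representation is unavailable --- which is precisely why the paper keeps the transport-plan/bisection machinery.

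Two small points to tidy up, neither fatal. First, your parametrization $x=x_i-a$ with $0\le a\le\delta_n$ only covers windows containing $x_i$; for a window lying inside $B_i$ on one side of $x_i$ the same integral with $a\notin[0,\delta_n]$ gives $\frac{\underline{C_i}}{k_i+1}\bigl|(\delta_n-a)^{k_i+1}-|a|^{k_i+1}\bigr|\ge\frac{\underline{C_i}}{k_i+1}\delta_n^{k_i+1}$ (this is exactly the paper's Lemma \ref{1lamma}), so the global minimum over window positions is still attained at $a=\delta_n/2$ --- just say so. Second, the existence of $\lambda_0>0$ off $\bigcup_i B_i$ does not follow from ``only $N$ zeros'' without some continuity of $\rho$; note, however, that the paper's own proof makes the identical implicit assumption when it sets $\lambda_{N+1}:=\min_{x\in B_{N+1}}\rho(x)$, and likewise your constant $C$ ends up depending on $\lambda_0$ and the $\Delta_i$ in addition to $k_i,\underline{C_i}$, which again matches what the paper's proof actually delivers rather than what its statement advertises.
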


\bigskip

We would like to sketch the proof of the theorems above. To prove Theorem \ref{w_1}, we use the fact that in one dimension, $W_{\infty}$ distance between two measures can be written as the $L^{\infty}$ norm of the difference of their quantile functions. Moreover, thanks to the $\frac{1}{\lambda}$- Lipschitz continuity of the quantile function of $\nu$, which follows by the assumption that $\rho \geq \lambda$,  the $L^{\infty}$ norm of the difference of the quantile functions can be bounded from above by the
difference between the cumulative distribution function of the true distribution $\nu$ and that of the empirical distribution $\nu_n$. Finally, the latter can be bounded by using the Dvoretzky-Kiefer-Wolfowitz inequality \cite{dvoretzky1956asymptotic}.

For the proof of Theorem \ref{w_2},  we first divide the domain $D$ into a family of sub-domains according to the value of $\rho(x)$. Then, we use the following scaling equality in each sub-domain
\begin{align*}
  W_{\infty}(\nu,\nu_n) = W_{\infty}(\theta\nu,\theta\nu_n),  
\end{align*}
with an appropriate scaling parameter $\theta$ such that after rescaling, the Lebesgue density of the rescaled measure $\theta \nu$ is bounded from above and below. With the density being bounded, we can estimate the $\infty$-Wasserstein distance by using the same method in \cite{trillos2014rate}. However, the mass of $\nu$ and $\nu_n$ may not be equal in each sub-domain. To resolve this issue, we introduce a new measure $\widetilde{\nu}$ such that $\widetilde{\nu}$ has the same mass as $\nu_n$ in each sub-domain. Since the distance between $\widetilde{\nu}$ and $\nu_n$ can be bounded by an argument similar to Theorem 1.1 in \cite{trillos2014rate}, it suffices to estimate the distance between $\nu$ and $\widetilde{\nu}$.

The following corollary is a direct consequence of  Theorem \ref{w_1} and Theorem \ref{w_2}.

\bigskip

\begin{Corollary}\label{w_3}
Let $D = (0,1) \subseteq \mathbb{R}$ and $\nu$ be a probability measure in D with density $\rho: D \rightarrow (0,\infty)$. Assume that there are only N points $x_1,\cdots,x_N$ satisfying $\rho(x_i) = 0.$ For each $x_i$, $\rho(x)$ satisfies (\ref{control_rho}).
Let $X_1,\cdots,X_n,\cdots$ be i.i.d. random variables sampled from $\nu$.
Then there exists a positive constant $C = C ( \dt, M, k_i, \underline{C_i} )$ such that except on a set with probability $O\left(\frac{1}{\log n}\right),$
\begin{equation}\label{eq:rate}
W_\infty(\nu,\nu_n) \leq C \cdot \max_i \Big(\frac{\log n }{n}\Big)^{\frac{1}{2(k_i+1)}}.
\end{equation}
\end{Corollary}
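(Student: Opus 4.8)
The plan is to deduce the corollary from Theorems \ref{w_1} and \ref{w_2} by a localization argument that isolates the (possibly unboundedly large) part of $\rho$ from the zeros of $\rho$. For each $i$ let $B_i=(x_i-\Delta_i,x_i+\Delta_i)$ be the neighborhood from (\ref{control_rho}); shrinking the $\Delta_i$ if necessary we may assume the $B_i$ are pairwise disjoint and $\overline{B_i}\subset D$, and we set $D_g:=D\setminus\bigcup_i\overline{B_i}$. On each $B_i$ the density is bounded above, $\rho\le\overline{C_i}\Delta_i^{k_i}<\infty$, by (\ref{control_rho}); and on the compact set $\overline{D_g}$, where $\rho>0$, it is bounded below, $\rho\ge\delta>0$. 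Thus, although $\rho$ need not be globally bounded, on every member of the partition $\mathcal I$ of $D$ into the (at most $N+1$) subintervals of $D_g$ and the intervals $B_1,\dots,B_N$, the density enjoys exactly the one-sided bound that one of the two theorems requires.

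First I would reduce to a piecewise estimate. Writing $N_I$ for the number of samples in $I\in\mathcal I$, the Dvoretzky--Kiefer--Wolfowitz inequality gives, with probability $1-O(1/\log n)$, that $\max_{I\in\mathcal I}|N_I/n-\nu(I)|\le t:=\sqrt{\log n/n}$; since $t\to0<\min_I\nu(I)$ this also forces $N_I\asymp n$ for every $I$. Working on this event, let $\widetilde\nu$ be the probability measure obtained from $\nu$ by multiplying $\rho$ on each $I\in\mathcal I$ by the constant $(N_I/n)/\nu(I)=1+O(t)$, so that $\widetilde\nu(I)=\nu_n(I)$ for every $I$. Then $W_\infty(\nu,\nu_n)\le W_\infty(\nu,\widetilde\nu)+W_\infty(\widetilde\nu,\nu_n)$. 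Because $\widetilde\nu$ and $\nu_n$ have equal mass on each member of a partition of $D$ into consecutive intervals, gluing piecewise optimal couplings yields $W_\infty(\widetilde\nu,\nu_n)\le\max_{I\in\mathcal I}W_\infty(\widetilde\nu|_I,\nu_n|_I)$ (cf.\ Lemma \ref{Anoth_def}). Conditionally on $N_I=m$, the samples in $I$ are $m$ i.i.d.\ draws from $\nu|_I/\nu(I)=\widetilde\nu|_I/(m/n)$ and $\nu_n|_I/(m/n)$ is their empirical measure; mapping $I$ affinely onto $(0,1)$ and normalizing --- which multiplies $W_\infty$ by $|I|\le1$, rescales the density by fixed positive constants, leaves the exponents $k_i$ in (\ref{control_rho}) unchanged, and replaces $n$ by $m\asymp n$ --- I would apply Theorem \ref{w_1} on the subintervals of $D_g$ and Theorem \ref{w_2} on each $B_i$. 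Since $k_i\ge1$, the Theorem \ref{w_1} bound $\asymp\sqrt{\log n/n}$ is dominated by $(\log n/n)^{1/(2(k_i+1))}$, and a union bound over the finitely many pieces (taking the parameter $M$ in Theorem \ref{w_1} of order $\log n$) gives $W_\infty(\widetilde\nu,\nu_n)\le C\max_i(\log n/n)^{1/(2(k_i+1))}$ off an event of probability $O(1/\log n)$.

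It remains to bound $W_\infty(\nu,\widetilde\nu)$. Here $\widetilde\nu=g\,\nu$ with $g$ piecewise constant and $\|g-1\|_\infty=O(t)$, so the monotone transport map $T=G^{-1}\circ F$ from $\nu$ to $\widetilde\nu$ (with $F,G$ the respective, strictly increasing, distribution functions) satisfies $\|F-G\|_\infty\le\|g-1\|_\infty=O(t)$. On the subintervals of $D_g$, where the $\widetilde\nu$-density is bounded below, this forces $|T(x)-x|=O(t)$; on $B_i$, since every subinterval of $B_i$ abutting $x_i$ of length $r$ has $\widetilde\nu$-mass $\gtrsim r^{k_i+1}$, moving a mass mismatch of size $O(t)$ across $B_i$ displaces it by at most $O(t^{1/(k_i+1)})$. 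Hence $W_\infty(\nu,\widetilde\nu)=\|T-\mathrm{id}\|_{L^\infty(\nu)}\le C\max_i(\log n/n)^{1/(2(k_i+1))}$ as well. Combining the two terms gives (\ref{eq:rate}) off an event of total probability $O(1/\log n)$, with $C$ depending only on the $k_i$, the $\underline{C_i}$, the lower bound $\delta$ on $D_g$, and the constant $M$ implicit in the choice of deviation threshold.

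I expect the main difficulty to lie in the localization near the zeros $x_i$: one must verify that the very step introduced to equalize masses --- replacing $\nu$ by $\widetilde\nu$ --- does not itself cost more than the target rate, which works precisely because near a zero of order $k_i$ a mass of size $\epsilon$ occupies a spatial scale of order $\epsilon^{1/(k_i+1)}$; and one must carefully track how affine rescaling and normalization of $\nu|_{B_i}$ transform the constants $\overline{C_i},\underline{C_i}$ and the random sample size $N_i$ when invoking Theorem \ref{w_2} on the subinterval. By contrast, the region $D_g$ is benign: there $\rho$ may be unbounded above, but a large density only makes samples denser, so $W_\infty$ can only improve, and the lower bound $\rho\ge\delta$ is all that Theorem \ref{w_1} needs.
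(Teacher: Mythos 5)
Your proposal is correct and follows essentially the same strategy as the paper's own proof: introduce an auxiliary measure $\widetilde{\nu}$ matching the empirical mass on each piece of a finite partition, apply Theorem \ref{w_1} where $\rho$ is bounded below and Theorem \ref{w_2} where it is bounded above near its zeros, and control $W_\infty(\nu,\widetilde{\nu})$ via the monotone transport map together with the observation that near a zero of order $k_i$ a mass $\epsilon$ occupies a length of order $\epsilon^{1/(k_i+1)}$. The only (immaterial) difference is your partition into the $B_i$ and $D\setminus\bigcup_i B_i$ in place of the paper's level sets $\{\rho<1\}$ and $\{\rho\ge 1\}$; both variants rest on the same implicit assumption that $\rho$ is bounded away from zero outside the neighborhoods of its zeros.
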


\bigskip

\subsection{Discussion}
As we mentioned earlier, quantifying the rate of convergence of $\nu_n$ to $\nu$ with respect to $\infty-$Wasserstein distance is very useful for understanding the consistency of spectral clustering\cite{trillos2016variational}. Our new convergence rate estimates will reshape the convergence of spectral clustering in the case where the density of true distribution is unbounded, as we discuss in what follows.

Let $V = \{x_1,\cdots,x_n\}$ be a set of data points in $\R^d$ sampled from a probability measure $\nu$. For each pair of points $x_i$ and $x_j$, we construct a weight $W^{\ep_n}_{i,j}$ between them to characterize their similarities. In general, the weight has the form of
\begin{equation*}
W^{\ep_n}_{i,j} = \eta_{\ep_n}(x_i - x_j),
\end{equation*}
where $\eta_{\ep_n}(z) = \frac{1}{\ep_n^d} \eta(\frac{z}{\ep_n})$ and $\eta$ is an appropriate kernel function(for example, Gaussian kernel). The weight matrix $W^{\ep_n}\in \R^{n\times n}$ is then defined by $W^{\ep_n}_{i,j}$.
Let $D^{\ep_n} \in \R^{n\times n}$ be a diagonal matrix with $D^{\ep_n}_{ii} = \sum_{j}W^{\ep_n}_{i,j}$. Then the discrete Dirichlet energy  and the relevant continuum Dirichlet energy are defined by
\begin{equation*}
  G_{n,\ep_n}(u) = \frac{1}{\ep^2_n n^2} \sum_{i,j}W^{\ep_n}_{i,j}\left(u\left(x_i\right) - u\left(x_j\right)\right)^2
\end{equation*}
and
\begin{equation*}
G(u) = \int_D |\nabla u|^2 \rho^2(x) dx,
\end{equation*}
where $\rho(x)$ is the density function of the underlying measure $\nu$.
 The unnormalized graph Laplacian $L_{n,\epsilon_n}$ is defined by
\begin{equation*}
L_{n,\ep_n} = D^{\ep_n} - W^{\ep_n}.
\end{equation*}

The aim of spectral clustering is to partition the data points $x_1,\cdots,x_n$ into $k$ meaningful groups. To do this, the spectrum of unnormalized graph Laplacian $L_{n,\epsilon_n}$ is used to embed the data points into a low dimensional space. Then we can apply some clustering algorithms like k-means to these points.
For more details about spectral clustering, one can see \cite{von2007tutorial}.

In \cite{trillos2016variational}, the authors proved that when the density function $\rho(x)$ of $\nu$ is bounded from above and below, the spectrum of unnormalized graph Laplacian $L_{n,\ep_n}$ converges to the spectrum of the corresponding continuum operator $L$, which implies the consistency of spectral clustering. They also gave a lower bound of the convergence rate at which the connectivity radius $\ep_n \rightarrow 0$ as $n \rightarrow \infty$. With our theorems, the results in \cite{trillos2016variational} can be generalized to the case when $\rho(x)$ is unbounded. In particular, the kernel width $\ep_n$ should be chosen to be slightly bigger than the right side of \eqref{eq:rate}, which is different from \cite{trillos2016variational}.

The proof will not be included in this paper since it is similar to the proof in \cite{trillos2016variational}. We sketch the outline of the proof as follows: First, we prove the $\Gamma-$ convergence of Dirichlet energy $G_{n,\ep_n}$ to $G$. Our theorems are used in this step to establish the probabilistic estimates and the constraint on $\ep_n$. Next, by min-max theorem, we know that the eigenvalues of $L_{n,\ep_n}(\textrm{or } L)$ can be written as the minimizers of $G_{n,\ep_n}(\textrm{or } G)$. Therefore, the convergence of spectrum is equivalent to the convergence of the minimizers of $G_{n,\ep_n}$ , which can be proved by the $\Gamma-$ convergence and compactness properties of $G_{n,\ep_n}$. Finally, with the convergence of spectrum, we can prove the consistency of spectral clustering.


The paper is organized as follows: In section 2, we introduce some preliminaries and notations. In section 3.1 and section 3.2,  we prove Theorem \ref{w_1} and Theorem \ref{w_2} respectively.  Finally, the proof of Corollary \ref{w_3} is presented in section 3.3.

\section{Preliminaries and notations}

\subsection{Notations}
Let $D = (0,1) \subset \R$ and $\mathcal{P}(D)$ be the set of all probability measures on $D$. Given a probability measure $\mu \in \mathcal{P}(D)$ and a Borel-measurable map $T$, we define the pushforward $\nu$ of measure $\mu$ under the map $T$ by setting
$$\nu (A) = T_{\sharp}\mu (A) = \mu(T^{-1}(A))$$
for any measurable set $A \subset D$. We call $T$ the transportation map between $\mu$ and $\nu$.

The $\infty-$Wasserstein distance $W_\infty(\mu,\nu)$ is defined by
$$W_{\infty}(\mu,\nu) = \inf_{\pi \in \Pi(\mu,\nu)}\textrm{esssup}_{\pi} | x  -y |,$$
  where $\Pi(\mu,\nu)$ is the set of all couplings between $\mu$ and $\nu$, i.e.
  $$\begin{aligned}
  \Pi(\mu,\nu) & = \Big\{\pi \in \mathcal{P}((0,1)^2) | \pi(A \times (0,1)) = \mu(A),\\
  & \qquad \pi((0,1) \times B) = \nu(B),  \text{ for all Borel sets } A,B \subset  (0,1)\Big\}.
  \end{aligned}
  $$

 \bigskip

  \begin{rem}
      Note that the definition of $W_{\infty}(\mu,\nu)$ can be generalized  to the case where $\mu$ and $\nu$ have the same mass on $D$. Therefore, in the sequential, we still write $W_{\infty}(\mu,\nu)$ when $\mu(D) = \nu(D)$ even though $\mu$ and $\nu$ are not necessarily probability measures.
  \end{rem}

\bigskip

  It was proved in \cite{champion2008wasserstein} that  if $\mu$ is absolutely continuous with respect to the Lebesgue measure, then for any optimal transport plan $\pi$ of $W_{\infty}(\mu,\nu)$, there exists a transportation map $ T : D \ra D $ such that $ T_{\sharp}\mu =\nu$ and $ \pi = (I \times T )_{\sharp} \mu $ .
  In particular, the optimal transportation plan of $W_{\infty}(\nu,\nu_n)$, with $\nu_n$ being the empirical measure of  the absolutely continuous probability measure $\nu$ is unique.

\subsection{Useful lemmas}
The following lemma collects some properties on $W_{\infty}$ to be used in subsequent sections. The proof is trivial and  thus is omitted.

\bigskip

\begin{lemma}\label{infiniteW}
Given measures $\mu_1,\mu_2,\mu_3 $ defined on $D$ with $\mu_1(D) = \mu_2(D) = \mu_3(D)$,
  then the followings hold:
\begin{enumerate}
\item
Triangle inequality: $W_{\infty}(\mu_1,\mu_3) \leq W_{\infty}(\mu_1,\mu_2) + W_{\infty}(\mu_2,\mu_3)$.

\item
Scaling equality: $W_{\infty}(\mu_1,\mu_2) = W_{\infty}(\alpha \mu_1, \alpha \mu_2)$, for $\forall \alpha >0$.

\item
$W_{\infty}(\mu_1,\mu_2) \leq \textrm{diam}(D) $.

\item
If $D = \bigsqcup_j D_j$  then
    $$W_{\infty}(\mu_1,\mu_2) \leq \max_j W_{\infty}(\mu_1|_{D_j},\mu_2|_{D_j}).$$
    \end{enumerate}
\end{lemma}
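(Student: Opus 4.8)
The plan is to verify the four assertions directly from the definition of $W_\infty$, in each case by exhibiting or manipulating an admissible coupling, and to use repeatedly the elementary observation that if $\pi$ and $\pi'$ are measures on $D\times D$ with the same null sets, then $\textrm{esssup}_\pi|x-y|=\textrm{esssup}_{\pi'}|x-y|$. Two of the claims are then immediate. For (3), every $\pi\in\Pi(\mu_1,\mu_2)$ is carried by $D\times D$, so $|x-y|\le\textrm{diam}(D)$ holds $\pi$-almost everywhere, whence $W_\infty(\mu_1,\mu_2)\le\textrm{diam}(D)$. For (2), the map $\pi\mapsto\alpha\pi$ is a bijection of $\Pi(\mu_1,\mu_2)$ onto $\Pi(\alpha\mu_1,\alpha\mu_2)$ (using the extension of $W_\infty$ to measures of equal total mass noted above), and $\alpha\pi$ has the same null sets as $\pi$; taking the infimum over $\pi$ yields the stated equality.

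For the triangle inequality (1), I would use the standard gluing construction. Fix $\varepsilon>0$ and choose $\pi_{12}\in\Pi(\mu_1,\mu_2)$ and $\pi_{23}\in\Pi(\mu_2,\mu_3)$ with $\textrm{esssup}$ of $|x-y|$ at most $W_\infty(\mu_1,\mu_2)+\varepsilon$ and $W_\infty(\mu_2,\mu_3)+\varepsilon$ respectively. Disintegrating both against their common marginal $\mu_2$ and gluing produces a measure $\sigma$ on $D^3$ whose $(1,2)$- and $(2,3)$-marginals are $\pi_{12}$ and $\pi_{23}$; its $(1,3)$-marginal $\pi_{13}$ lies in $\Pi(\mu_1,\mu_3)$. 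Since $|x-z|\le|x-y|+|y-z|$ pointwise and the two summands are bounded $\sigma$-a.e.\ by $W_\infty(\mu_1,\mu_2)+\varepsilon$ and $W_\infty(\mu_2,\mu_3)+\varepsilon$, one gets $W_\infty(\mu_1,\mu_3)\le\textrm{esssup}_{\pi_{13}}|x-z|\le W_\infty(\mu_1,\mu_2)+W_\infty(\mu_2,\mu_3)+2\varepsilon$, and letting $\varepsilon\to0$ completes (1). For the localization bound (4), write $D=\bigsqcup_jD_j$; we may assume $\mu_1(D_j)=\mu_2(D_j)$ for every $j$, since otherwise a term on the right-hand side is, by the equal-mass convention, infinite and there is nothing to prove. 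Choose for each $j$ a coupling $\pi_j\in\Pi(\mu_1|_{D_j},\mu_2|_{D_j})$ optimal for $W_\infty(\mu_1|_{D_j},\mu_2|_{D_j})$ (or $\varepsilon$-optimal, with a limit taken at the end), and set $\pi:=\sum_j\pi_j$. Disjointness of the $D_j$ together with $D=\bigsqcup_jD_j$ gives $\pi\in\Pi(\mu_1,\mu_2)$, and since each $\pi_j$ is carried by $D_j\times D_j$ we have $\textrm{esssup}_\pi|x-y|=\max_j\textrm{esssup}_{\pi_j}|x-y|=\max_jW_\infty(\mu_1|_{D_j},\mu_2|_{D_j})$. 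As $\pi$ is one admissible coupling, $W_\infty(\mu_1,\mu_2)$ is no larger, which is the claim.

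Since all the ingredients are classical, there is no genuine obstacle here; the one place that deserves a sentence of justification is the gluing step in (1) — namely that the essential supremum of $|x-z|$ against the $(1,3)$-marginal is dominated by the essential supremum of $|x-y|+|y-z|$ against $\sigma$ — which is exactly why the disintegration against $\mu_2$ is needed rather than a naive product measure. The remainder is bookkeeping with marginals and null sets, which is why the statement can fairly be called trivial.
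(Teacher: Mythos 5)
Your proof is correct. The paper in fact omits the proof of this lemma entirely (``The proof is trivial and thus is omitted''), and your argument --- the diameter bound on the support of any coupling for (3), the bijection $\pi\mapsto\alpha\pi$ between coupling sets for (2), the gluing lemma along the common marginal $\mu_2$ for (1), and summing per-piece couplings $\pi=\sum_j\pi_j$ over the partition for (4) --- is precisely the standard verification the authors are implicitly relying on, with the right attention paid to the two delicate points (the essential supremum under the $(1,3)$-marginal, and the equal-mass requirement on each $D_j$).
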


\bigskip

The following two lemmas gives two different characterizations of $W_{\infty}(\mu, \nu)$.

\bigskip

\begin{lemma}[\cite{champion2008wasserstein}]\label{Anoth_def}
  Let $\mu, \nu$ be two Borel measures   with $\mu$ absolutely continuous with respect to the Lebesgue measure and $\mu(D) = \nu(D)$. Then there exists an optimal transportation map $T: D \ra D$ such that $T_{\sharp}\mu =\nu$ and 
  $$W_{\infty}(\mu,\nu) = \|I - T\|_{L^{\infty}(D)}.$$
  Furthermore, if $\nu = \Sigma_{i=1}^{k}a_i \delta_{y_i}$ with $y_i \in D$ and positive numbers $a_i, i = 1,\cdots, k$, then there exists a unique transportation map $T^{\star}: D \ra D$ such that
  $$W_{\infty}(\mu,\nu) = \|I - T^{\star}\|_{L^{\infty}(D)}.$$
\end{lemma}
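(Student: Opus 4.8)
The plan is to reduce everything to one–dimensional distribution and quantile functions and to produce the optimal map explicitly as the monotone rearrangement; the general existence statement is of course the one–dimensional case of \cite{champion2008wasserstein}, but on $D=(0,1)$ it is cleaner to argue directly. Write $F_\mu(x)=\mu((0,x])$ and $F_\nu(x)=\nu((0,x])$ for the (sub)distribution functions; since $\mu\ll\mathrm{Leb}$, $F_\mu$ is continuous. Put $F_\nu^{\leftarrow}(s):=\inf\{z\in D:\ F_\nu(z)\ge s\}$ and $T:=F_\nu^{\leftarrow}\circ F_\mu$, modifying $T$ on the $\mu$–null set where $F_\mu$ is locally constant (e.g.\ setting $T=I$ there) so that the identity reads with the $L^\infty(D)$ norm even when $\mathrm{supp}\,\mu\ne\overline D$; in all applications here $\mu$ has an a.e.\ positive density and this point is moot. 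Then $T$ is nondecreasing with $T_\sharp\mu=\nu$ by the standard inverse-transform computation, so two things remain: (i) $\|I-T\|_{L^\infty(\mu)}=W_\infty(\mu,\nu)$; and (ii) when $\nu=\sum_{i=1}^k a_i\delta_{y_i}$, $T$ is the only transport map, up to $\mu$–null modifications, attaining this equality.

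For (i): the inequality $W_\infty(\mu,\nu)\le\|I-T\|_{L^\infty(\mu)}$ is immediate, since $\pi:=(I\times T)_\sharp\mu$ is a coupling of $\mu$ and $\nu$ along which $|x-y|=|x-T(x)|$. For the reverse, I would fix an arbitrary $L>W_\infty(\mu,\nu)$ and a coupling $\pi\in\Pi(\mu,\nu)$ with $|x-y|\le L$ holding $\pi$–a.e.; projecting this support constraint onto the marginals yields, for every $x$,
\[
F_\nu(x-L)\ \le\ F_\mu(x)\ \le\ F_\nu(x+L).
\]
The right-hand inequality gives $T(x)\le x+L$ for \emph{every} $x$ (as $x+L\in\{z:F_\nu(z)\ge F_\mu(x)\}$). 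The left-hand inequality gives $T(x)\ge x-L$ for $\mu$–a.e.\ $x$: it can fail only when $F_\mu(x)=F_\nu(x-L)$ with $F_\nu$ constant on a nondegenerate interval ending at $x-L$, i.e.\ only when $F_\mu(x)$ is one of the at most countably many ``flat values'' of $F_\nu$, and the $F_\mu$–preimage of a countable set is $\mu$–null because $F_\mu$ is continuous. Hence $\|I-T\|_{L^\infty(\mu)}\le L$, and letting $L\downarrow W_\infty(\mu,\nu)$ proves (i).

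For (ii): a transport map $S$ onto $\sum_i a_i\delta_{y_i}$ is nothing but a measurable partition $D=\bigsqcup_i A_i$ with $\mu(A_i)=a_i$ and $S\equiv y_i$ on $A_i$, and its cost equals $\max_i\textrm{esssup}_{x\in A_i}|x-y_i|$. Order the atoms as $y_1<\cdots<y_k$. The key tool is a rearrangement inequality for the $\max$–cost: whenever a partition has a ``crossing'' — positive $\mu$–mass of $A_i$ sitting to the right of positive $\mu$–mass of $A_j$ with $y_i>y_j$ — swapping that mass does not increase $\max(|x-y_i|,|x'-y_j|)$; iterating, every optimal partition may be taken to satisfy $A_1<A_2<\cdots<A_k$ up to $\mu$–null sets, and such an ordered partition has its break points pinned at $F_\mu^{\leftarrow}(a_1+\cdots+a_i)$, hence agrees $\mu$–a.e.\ with $T$. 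The step I expect to be the real obstacle is precisely this one: crossing removal need not \emph{strictly} lower the $L^\infty$–cost, so ruling out a wasteful optimal crossing requires a second selection principle — e.g.\ picking, among all $W_\infty$–optimal plans, the one minimizing $\int|x-y|^2\,d\pi$, which in one dimension is exactly the monotone map (this is the mechanism underlying \cite{champion2008wasserstein}). The remaining pieces — continuity of $F_\mu$, the marginal comparison inequalities, and the $\max$–rearrangement inequality — are routine, and I would not dwell on them.
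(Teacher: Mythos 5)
The paper offers no proof of this lemma---it is imported wholesale from \cite{champion2008wasserstein}---so your argument has to stand on its own. Part (i) does: the monotone rearrangement $T=F_\nu^{\leftarrow}\circ F_\mu$, the marginal comparison $F_\nu(x-L)\le F_\mu(x)\le F_\nu(x+L)$ extracted from a coupling concentrated on $\{|x-y|\le L\}$, and the disposal of the exceptional set (where $F_\mu(x)$ hits one of the countably many flat values of $F_\nu$) via the continuity of $F_\mu$ together give a correct, genuinely one-dimensional proof of existence and of $W_\infty(\mu,\nu)=\|I-T\|_{L^\infty(\mu)}$. This is more elementary than the route in \cite{champion2008wasserstein}, which works in $\R^d$ through infinitely cyclically monotone plans, and your caveat about $L^\infty(D)$ versus $L^\infty(\mu)$ is the right one to raise, since the lemma is silent about which reference measure the essential supremum is taken against.

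Part (ii) contains a genuine gap, and it is not one that a sharper rearrangement argument can close: read literally---``there is exactly one transport map attaining $W_\infty$''---the uniqueness assertion is false. Take $\mu$ to be Lebesgue measure on $D=(0,1)$ and $\nu=\frac12\delta_{2/5}+\frac12\delta_{3/5}$. Then $W_\infty(\mu,\nu)=\frac25$, because almost every point of $(0,1)$ must lie within $L$ of one of the two atoms, forcing $[\frac25-L,\frac35+L]\supseteq(0,1)$. This value is attained both by the monotone map (sending $(0,\frac12)$ to $\frac25$ and $(\frac12,1)$ to $\frac35$) and by the map sending $(0,\frac14)\cup(\frac12,\frac34)$ to $\frac25$ and $(\frac14,\frac12)\cup(\frac34,1)$ to $\frac35$: a direct check shows every relevant distance is at most $\frac25$, the mass constraints hold, and the two maps differ on a set of measure $\frac12$. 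This is exactly the ``wasteful optimal crossing'' you anticipated, and it cannot be ruled out. What \cite{champion2008wasserstein} actually proves is that there is a unique \emph{infinitely cyclically monotone} optimal plan and that it is induced by a map when $\mu\ll\textrm{Leb}$; in one dimension that distinguished map is your $T$, and your secondary minimization of $\int|x-y|^2\,d\pi$ over optimal plans selects the same object. So the honest conclusion of part (ii) is uniqueness within the ICM (equivalently, secondary-variational) class, not among all optimal maps; the lemma should be read in that restricted sense, which is all the paper needs, since its later arguments use only the existence of an optimal map realizing $W_\infty$.
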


\bigskip

\begin{lemma}[{{\cite[Remark 2.19]{villani2003topics}}}]\label{lemma_quan}
  Let $\mu$, $\nu$ be two probability measures on $\R$. Denote the cumulative distribution functions of $\mu$ and $\nu$ by $F(x)$ and $G(x)$ respectively. Then we have the following equality that
  \begin{equation*}
    W_{\infty}(\mu,\nu) = \left\|F^{-1} - G^{-1}\right\|_{L^{\infty}}.
  \end{equation*}
\end{lemma}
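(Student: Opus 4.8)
The identity to prove is that, with $F^{-1}$ and $G^{-1}$ the generalized (left-continuous) quantile functions on $(0,1)$, namely $F^{-1}(u) = \inf\{x \in \R : F(x) \ge u\}$ and likewise for $G^{-1}$, one has $W_\infty(\mu,\nu) = \|F^{-1}-G^{-1}\|_{L^\infty((0,1))}$. The plan is to establish the two inequalities separately. The single tool used throughout is the adjunction property of the generalized inverse: since $F$ is non-decreasing and right-continuous, the set $\{x : F(x) \ge u\}$ is a closed half-line $[F^{-1}(u),\infty)$, so that
\begin{equation*}
F^{-1}(u) \le x \iff u \le F(x), \qquad u \in (0,1),\ x \in \R,
\end{equation*}
and in particular $F(F^{-1}(u)) \ge u$; the same statements hold for $G$.

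For the bound $W_\infty(\mu,\nu) \le \|F^{-1}-G^{-1}\|_{L^\infty}$ I would exhibit the monotone coupling. Let $U$ be uniformly distributed on $(0,1)$ and set $X := F^{-1}(U)$ and $Y := G^{-1}(U)$. The adjunction gives $\mathbb{P}(X\le x) = \mathbb{P}(U \le F(x)) = F(x)$ and likewise $\mathbb{P}(Y\le y) = G(y)$, so the law $\pi_0$ of $(X,Y)$ belongs to $\Pi(\mu,\nu)$. Since $|X-Y| = |F^{-1}(U) - G^{-1}(U)| \le \|F^{-1}-G^{-1}\|_{L^\infty}$ almost surely, we obtain $\textrm{esssup}_{\pi_0}|x-y| \le \|F^{-1}-G^{-1}\|_{L^\infty}$, and the definition of $W_\infty$ as an infimum over $\Pi(\mu,\nu)$ yields the claim. (If the right-hand side is infinite there is nothing to prove.)

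For the reverse bound $W_\infty(\mu,\nu) \ge \|F^{-1}-G^{-1}\|_{L^\infty}$, fix an arbitrary $\pi \in \Pi(\mu,\nu)$ and set $r := \textrm{esssup}_\pi|x-y|$, which we may assume finite. By the definition of the essential supremum together with continuity of $\pi$ from below, $\pi(\{|x-y| > r\}) = 0$, so $\pi$ is concentrated on the strip $\{(x',y') : |x'-y'| \le r\}$. Hence for every $x \in \R$, restricting to that strip forces $y' \le x + r$ whenever $x' \le x$, so that
\begin{equation*}
F(x) = \pi\bigl((-\infty,x]\times\R\bigr) \le \pi\bigl(\R\times(-\infty,x+r]\bigr) = G(x+r),
\end{equation*}
and symmetrically $G(x) \le F(x+r)$ for all $x$. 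To convert these pointwise CDF inequalities into an estimate on the quantile functions I would chase the adjunction: for each $u \in (0,1)$, applying $F(x) \le G(x+r)$ at $x = F^{-1}(u)$ gives $G(F^{-1}(u)+r) \ge F(F^{-1}(u)) \ge u$, whence $G^{-1}(u) \le F^{-1}(u) + r$; the symmetric argument gives $F^{-1}(u) \le G^{-1}(u) + r$. Therefore $|F^{-1}(u) - G^{-1}(u)| \le r$ for all $u$, i.e. $\|F^{-1}-G^{-1}\|_{L^\infty} \le r = \textrm{esssup}_\pi|x-y|$; taking the infimum over $\pi \in \Pi(\mu,\nu)$ finishes the proof.

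The argument is conceptually straightforward, and the only points that require care are bookkeeping ones. First, one must set up the generalized inverses so that the adjunction $F^{-1}(u)\le x \iff u \le F(x)$ genuinely holds; this is exactly where right-continuity of the CDF enters, and it also underlies the fact that $X = F^{-1}(U)$ has law $\mu$. Second, one must justify that $\textrm{esssup}_\pi|x-y| = r$ implies $\pi(\{|x-y|>r\}) = 0$ rather than merely $\pi(\{|x-y|>r+\epsilon\}) = 0$ for each $\epsilon>0$; this follows by writing $\{|x-y|>r\}$ as the increasing union of the sets $\{|x-y|>r+1/k\}$. Neither is a genuine obstacle, and since the paper's measures are supported on the bounded interval $(0,1)$ all quantities are automatically finite, so the degenerate cases can be dispatched in one sentence.
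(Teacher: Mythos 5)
Your proof is correct. The paper does not prove this lemma at all --- it simply cites \cite[Remark 2.19]{villani2003topics} --- so you have supplied a complete, self-contained argument for a result the authors take as known. Both halves are sound: the monotone (quantile) coupling $\bigl(F^{-1}(U),G^{-1}(U)\bigr)$ gives the upper bound, and for the lower bound your chain ``$\pi$ concentrated on $\{|x'-y'|\le r\}$ $\Rightarrow$ $F(x)\le G(x+r)$ and $G(x)\le F(x+r)$ $\Rightarrow$ $|F^{-1}(u)-G^{-1}(u)|\le r$ via the adjunction $F^{-1}(u)\le x \iff u\le F(x)$'' is exactly right, and you correctly flag the two places where care is needed (right-continuity for the adjunction, and the increasing-union argument showing $\pi(\{|x-y|>r\})=0$). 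This is essentially the standard textbook derivation of the one-dimensional formula, specialized to $p=\infty$.
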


\bigskip
\begin{lemma}[{{\cite[Lemma 2.2]{trillos2014rate}}}]\label{inf_den}
Let  $\nu_1$ and $\nu_2$ be two probability measures defined on $D$ with density functions $\rho_1(x)$ and $\rho_2(x)$ respectively. Assume that there exists a positive constant $\lambda > 0$ such that
$$\rho_i(x) \geq \lambda >0 ,~i=1,2.$$
Then there exists $C> 0 $ such that
$$W_{\infty}(\nu_1,\nu_2) \leq \frac{C}{\lambda}\cdot \textrm{diam}(D)\|\rho_1(x)-\rho_2(x)\|_{L^{\infty}(D)}.$$
\end{lemma}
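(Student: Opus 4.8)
The plan is to reduce everything to the one-dimensional quantile representation supplied by Lemma \ref{lemma_quan} and then exploit the lower bound $\rho_i\ge\lambda$ to control the quantile functions. Write $F_i(x)=\int_0^x\rho_i(t)\,dt$ for the cumulative distribution function of $\nu_i$, $i=1,2$, and let $F_i^{-1}:[0,1]\to[0,1]$ denote its inverse. Since $\rho_i\ge\lambda>0$ on $D$, each $F_i$ is continuous and strictly increasing with $F_i(0)=0$ and $F_i(1)=1$, so $F_i^{-1}$ is a genuine (and continuous) inverse, and by Lemma \ref{lemma_quan} it suffices to bound $\|F_1^{-1}-F_2^{-1}\|_{L^\infty([0,1])}$.

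First I would record a Lipschitz estimate on the quantile functions. For $0\le s<s'\le1$, putting $x=F_i^{-1}(s)$ and $x'=F_i^{-1}(s')$ (so $x<x'$), one has $s'-s=\int_x^{x'}\rho_i(t)\,dt\ge\lambda(x'-x)$, whence $|F_i^{-1}(s)-F_i^{-1}(s')|\le\frac1\lambda|s-s'|$; that is, $F_i^{-1}$ is $\frac1\lambda$-Lipschitz on $[0,1]$. Next, for a fixed $s\in[0,1]$, I would set $x:=F_1^{-1}(s)$, so that $F_1(x)=s$ and $F_2^{-1}(s)=F_2^{-1}(F_1(x))$, while $x=F_2^{-1}(F_2(x))$; the Lipschitz bound for $F_2^{-1}$ then gives
$$|F_1^{-1}(s)-F_2^{-1}(s)|=\big|F_2^{-1}(F_2(x))-F_2^{-1}(F_1(x))\big|\le\frac1\lambda|F_2(x)-F_1(x)|\le\frac1\lambda\|F_1-F_2\|_{L^\infty(D)}.$$
Finally, for every $x\in D$,
$$|F_1(x)-F_2(x)|=\Big|\int_0^x(\rho_1(t)-\rho_2(t))\,dt\Big|\le x\,\|\rho_1-\rho_2\|_{L^\infty(D)}\le\textrm{diam}(D)\,\|\rho_1-\rho_2\|_{L^\infty(D)},$$
and combining the last two displays and taking the supremum over $s\in[0,1]$ yields $W_\infty(\nu_1,\nu_2)=\|F_1^{-1}-F_2^{-1}\|_{L^\infty}\le\frac1\lambda\,\textrm{diam}(D)\,\|\rho_1-\rho_2\|_{L^\infty(D)}$, i.e. the claim with $C=1$.

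There is no real obstacle here; the only point requiring a little care is the regularity of the quantile functions — it is precisely the hypothesis $\rho_i\ge\lambda$ that forces $F_i$ to be a strictly increasing homeomorphism of $[0,1]$, so that $F_i^{-1}$ is well defined and $\frac1\lambda$-Lipschitz, which is the crux of the estimate. (In dimension $d\ge2$, as in the original reference \cite{trillos2014rate}, the quantile argument is unavailable and one instead argues through the stability of optimal transport maps under perturbations of the density, which accounts for the unspecified constant $C$ in the statement; for the one-dimensional domain $D=(0,1)$ considered here, $C=1$ suffices.)
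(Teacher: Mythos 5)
Your argument is correct. Note that the paper does not actually prove this lemma: it is imported verbatim as Lemma 2.2 of \cite{trillos2014rate}, whose proof is carried out in arbitrary dimension $d$ and therefore cannot use quantile functions; that is the source of the unspecified constant $C$. Your proof is a legitimate, self-contained, and strictly one-dimensional alternative: the reduction via Lemma \ref{lemma_quan} to $\|F_1^{-1}-F_2^{-1}\|_{L^\infty}$, the observation that $\rho_i\ge\lambda$ makes each $F_i^{-1}$ a genuine $\tfrac1\lambda$-Lipschitz inverse, the identity $F_1^{-1}(s)-F_2^{-1}(s)=F_2^{-1}(F_2(x))-F_2^{-1}(F_1(x))$ with $x=F_1^{-1}(s)$, and the bound $\|F_1-F_2\|_{L^\infty}\le \textrm{diam}(D)\,\|\rho_1-\rho_2\|_{L^\infty}$ are all valid, and they yield the conclusion with $C=1$. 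Incidentally, this is the same mechanism (Lipschitz continuity of the quantile function under a density lower bound) that the paper uses to prove Theorem \ref{w_1}, so your route is very much in the spirit of the rest of the paper; what it buys is an explicit constant and an elementary proof, at the cost of being confined to $d=1$, which is all that is needed here.
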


\bigskip

The following three probability inequalities on binomial random variables and the Dvoretzky-Kiefer-Wolfowitz inequality will be used in the proofs of main results.

\bigskip

\begin{lemma}
Let $S_n \sim \textrm{Bin}(n,p)$ be the independent binomial random variables. For $t > 0$, Chebychev's inequality\cite{chebichef1867} states that
\begin{equation*}
  \mathbb{P}\left(\frac{|S_n - n\cdot p|}{\sqrt{np(1-p)}}  \geq t\right) \leq \frac{1}{t^2}.
\end{equation*}
The Chernoff's inequality \cite{Chernoff_1952} states that
\begin{equation*}
  \mathbb{P}\Big(\big|\frac{S_n}{n} - p \big| \geq t\Big) \leq 2 \exp(-2nt^2).
\end{equation*}
Bernstein's inequality \cite{Bernstein1927} states that
\begin{equation*}
\mathbb{P}\Big(\big|\frac{S_n}{n} - p \big| \geq t\Big) \leq 2 \exp\Big(-\frac{\frac12 n^2t^2}{np(1-p)+\frac13 nt}\Big).
\end{equation*}

\end{lemma}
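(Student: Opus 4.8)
The plan is to obtain all three estimates by the classical Cram\'er--Chernoff exponential-moment method, writing $S_n=\sum_{i=1}^n Y_i$ where $Y_1,\dots,Y_n$ are i.i.d.\ Bernoulli$(p)$ random variables, so that $\mathbb{E}[S_n]=np$ and $\operatorname{Var}(S_n)=np(1-p)$. For Chebychev's inequality I would simply apply Markov's inequality to the nonnegative random variable $(S_n-np)^2$: since $\mathbb{E}[(S_n-np)^2]=np(1-p)$, this gives $\mathbb{P}(|S_n-np|\ge t\sqrt{np(1-p)})\le np(1-p)/(t^2 np(1-p))=t^{-2}$, which is the claimed bound.

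For Chernoff's (Hoeffding-type) inequality the key step is Hoeffding's lemma: any random variable $Y$ supported in $[0,1]$ with mean $p$ satisfies $\mathbb{E}[e^{s(Y-p)}]\le e^{s^2/8}$ for every $s\in\mathbb{R}$, which follows by bounding the second derivative of the logarithm of the moment generating function by $1/4$. Applying Markov's inequality to $e^{s(S_n-np)}$ for $s>0$ and using independence yields $\mathbb{P}(S_n-np\ge nt)\le e^{-snt+ns^2/8}$; optimizing over $s$ at $s=4t$ gives $e^{-2nt^2}$. The left tail is handled symmetrically (replace $Y_i$ by $1-Y_i$), and a union bound produces the factor $2$.

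For Bernstein's inequality the key step is the sharper moment-generating-function estimate that also exploits the variance: for a centered variable $X=Y-p$ with $|X|\le 1$ and $\mathbb{E}[X^2]=p(1-p)$ one has $\mathbb{E}[e^{sX}]\le\exp\!\left(\frac{s^2 p(1-p)}{2(1-s/3)}\right)$ for $0\le s<3$, obtained by Taylor-expanding $e^{sX}$, using $\mathbb{E}[X^k]\le\mathbb{E}[X^2]$ for $k\ge 2$ (since $|X|\le 1$), and summing the resulting geometric series. Then Markov's inequality applied to $e^{s(S_n-np)}$ together with independence gives $\mathbb{P}(S_n-np\ge nt)\le\exp\!\left(-snt+\frac{ns^2 p(1-p)}{2(1-s/3)}\right)$, and the choice $s=t/(p(1-p)+t/3)$, which lies in $(0,3)$, simplifies to $\exp\!\left(-\frac{\frac12 n^2 t^2}{np(1-p)+\frac13 nt}\right)$. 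As before, the two tails are treated symmetrically and combined by a union bound.

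\textbf{Main obstacle.} There is no genuine difficulty here, since all three inequalities are classical and the cited references contain complete proofs; the only mildly technical points are the two moment-generating-function bounds (Hoeffding's lemma and the Bernstein variance bound) and the algebraic optimization over $s$ in the Bernstein case, where one must check that the optimal $s$ stays inside the admissible interval $(0,3)$.
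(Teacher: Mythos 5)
Your proposal is correct: the paper states these three classical inequalities without proof, citing the original references, and your Cram\'er--Chernoff derivations (Markov applied to $(S_n-np)^2$ for Chebychev, Hoeffding's lemma with the optimal $s=4t$ for the Chernoff--Hoeffding bound, and the variance-sensitive MGF estimate with $s=t/(p(1-p)+t/3)$ for Bernstein) are exactly the standard arguments, with the exponents and the admissibility check $s<3$ all verified correctly. Nothing further is needed.
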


\bigskip

\begin{lemma}[ Dvoretzky-Kiefer-Wolfowitz inequality \cite{dvoretzky1956asymptotic}]
  Let $\{X_i\}_{i=1}^n$ be the i.i.d. random variables sampled from a probability measure $\nu$. Let $F(x)$ be the cumulative distribution function of $\nu$ and $F_n(x)$ be the cumulative distribution function of $\nu_n$. Then for $\forall t > 0$,
  \begin{equation*}
    \mathbb{P}\left(\sup_x|F_n(x) - F(x)| \geq t\right) \leq 2\exp(-2nt^2).
  \end{equation*}
\end{lemma}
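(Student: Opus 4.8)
The plan is to prove the sharp Dvoretzky--Kiefer--Wolfowitz inequality (with Massart's constant) in three moves: reduce to the uniform law on $(0,1)$, reduce the two-sided statement to a one-sided one by a union bound, and then establish the one-sided exponential bound.

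\emph{Step 1: reduction to the uniform distribution.} Let $F^{-1}(u) := \inf\{x : F(x) \ge u\}$ be the generalized quantile function of $\nu$, and let $U_1, \dots, U_n$ be i.i.d.\ uniform on $(0,1)$. Then $(F^{-1}(U_1), \dots, F^{-1}(U_n))$ has the same law as $(X_1, \dots, X_n)$, so we may assume $X_i = F^{-1}(U_i)$. Using the standard equivalence $F^{-1}(u) \le x \iff u \le F(x)$, one gets $F_n(x) = G_n(F(x))$, where $G_n$ is the (right-continuous) empirical CDF of $U_1, \dots, U_n$; since $F$ takes values in a subset of $[0,1]$,
\begin{equation*}
\sup_x |F_n(x) - F(x)| = \sup_x |G_n(F(x)) - F(x)| \le \sup_{0 \le u \le 1} |G_n(u) - u|,
\end{equation*}
with equality when $F$ is continuous. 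Hence it suffices to bound $\mathbb{P}\big(\|G_n - \mathrm{id}\|_{\infty} \ge t\big)$, i.e.\ to treat $\nu = \mathrm{Unif}(0,1)$.

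\emph{Step 2: reduction to a one-sided bound.} Write $D_n^{+} := \sup_u (G_n(u) - u)$ and $D_n^{-} := \sup_u (u - G_n(u))$, so that $\|G_n - \mathrm{id}\|_{\infty} = \max(D_n^{+}, D_n^{-})$ and
\begin{equation*}
\mathbb{P}\big(\|G_n - \mathrm{id}\|_{\infty} \ge t\big) \le \mathbb{P}(D_n^{+} \ge t) + \mathbb{P}(D_n^{-} \ge t).
\end{equation*}
The change of variable $u \mapsto 1-u$ sends $G_n$ to the empirical CDF of $1 - U_1, \dots, 1 - U_n$ and shows $D_n^{-} \stackrel{d}{=} D_n^{+}$, so the whole problem reduces to the one-sided estimate $\mathbb{P}(D_n^{+} \ge t) \le \exp(-2nt^2)$; the factor $2$ in the statement of the lemma is exactly the price of this union bound.

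\emph{Step 3: the one-sided estimate, and the main obstacle.} Let $U_{(1)} \le \dots \le U_{(n)}$ be the order statistics. Since $G_n \equiv k/n$ on $[U_{(k)}, U_{(k+1)})$, the function $G_n(u) - u$ is maximized over each such interval at its left endpoint, so $D_n^{+} = \max_{1 \le k \le n}(\tfrac{k}{n} - U_{(k)})^{+}$ and, since $\{U_{(k)} \le s\}$ is the event that at least $k$ of the $U_i$ lie in $[0,s]$,
\begin{equation*}
\{D_n^{+} \ge t\} = \bigcup_{nt \le k \le n}\Big\{ U_{(k)} \le \tfrac{k}{n} - t\Big\} = \bigcup_{nt \le k \le n}\Big\{\mathrm{Bin}\big(n, \tfrac{k}{n} - t\big) \ge k\Big\}.
\end{equation*}
For each fixed $k$ the one-sided form of Chernoff's inequality gives $\mathbb{P}(U_{(k)} \le k/n - t) = \mathbb{P}\big(\tfrac{1}{n}\mathrm{Bin}(n, k/n-t) - (k/n - t) \ge t\big) \le \exp(-2nt^2)$, and a union bound over the at most $n$ admissible values of $k$ yields the Dvoretzky--Kiefer--Wolfowitz inequality with the non-sharp prefactor $n$ in front of $\exp(-2nt^2)$ — which is already more than enough for every use made of the inequality in this paper. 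The genuine difficulty lies precisely in \emph{removing this polynomial prefactor} to land exactly on $\exp(-2nt^2)$: the events $\{U_{(k)} \le k/n - t\}$ overlap heavily, so they cannot be summed separately, and one must instead follow Massart, controlling $\mathbb{P}(D_n^{+} \ge t)$ either via Smirnov's exact formula for the law of $D_n^{+}$ together with a careful analytic estimate, or via a first-passage decomposition at the first index $\tau$ with $G_n(U_{(\tau)}) - U_{(\tau)} \ge t$ combined with an exponential supermartingale bounded by the Brownian-bridge crossing probability $\exp(-2nt^2)$. As an alternative that avoids this sharpening altogether (at the cost of the sharp constant), the bounded-differences inequality applied to $\|F_n - F\|_\infty$ together with the symmetrization estimate $\mathbb{E}\|F_n - F\|_\infty \le C n^{-1/2}$ yields a DKW-type bound $\mathbb{P}(\|F_n - F\|_\infty \ge t) \lesssim e^{-cnt^2}$ with universal constants, which suffices to control the exceptional sets in Theorems \ref{w_1} and \ref{w_2} up to a harmless $\log n$ factor.
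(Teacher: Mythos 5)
This lemma is not proved in the paper at all; it is quoted from the literature (Dvoretzky--Kiefer--Wolfowitz \cite{dvoretzky1956asymptotic}, with the sharp constant $2e^{-2nt^2}$ due to Massart), so there is no in-paper argument to compare yours against. Your Steps 1 and 2 --- the quantile-transform reduction to the uniform law on $(0,1)$ and the symmetry reduction of the two-sided bound to the one-sided deviation $D_n^{+}$ --- are correct and are exactly the standard preliminary reductions. The identification $D_n^{+}=\max_k (k/n - U_{(k)})^{+}$ and the computation $\mathbb{P}\bigl(U_{(k)} \le k/n - t\bigr) = \mathbb{P}\bigl(\tfrac1n \mathrm{Bin}(n, k/n - t) - (k/n - t) \ge t\bigr) \le e^{-2nt^2}$ are also right.

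The gap is the one you yourself flag: after the union bound over $k$ you obtain only $\mathbb{P}(D_n^{+}\ge t) \le n\,e^{-2nt^2}$, hence a two-sided bound $2n\,e^{-2nt^2}$, which is strictly weaker than the stated $2e^{-2nt^2}$. Removing that polynomial prefactor \emph{is} Massart's theorem, and your proposal only names the two known routes to it (Smirnov's exact law of $D_n^{+}$ with a delicate analytic estimate, or a first-passage/supermartingale argument) without carrying either out; so as written this is not a proof of the lemma as stated. Your closing observation is nonetheless accurate and worth recording: every invocation of the lemma in this paper tolerates the prefactor $n$ (or the bounded-differences bound $Ce^{-cnt^2}$) at the cost of replacing $\log(2M)$ by $\log(2Mn)$ in \eqref{result_w_1} and of harmless changes of constants in Step~1 of Theorem \ref{w_2}; only the clean constants, not the convergence rates, depend on the sharp form. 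If a self-contained proof is wanted one must reproduce Massart's argument in full; otherwise the result should simply be cited, as the paper does.
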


\bigskip

\section{Convergence of empirical measure}

\subsection{Proof of Theorem \ref{w_1} }

\begin{proof}

Denote the cumulative distribution function of $\nu_n$ by $F_n(x)$ and that of $\nu$ by $F(x)$. Thanks to the Dvoretzky-Kiefer-Wolfowitz inequality \cite{dvoretzky1956asymptotic},
\begin{equation*}
  \mathbb{P}\left(\sup_x|F_n(x) - F(x)| \geq t\right) \leq 2\exp(-2nt^2).
\end{equation*}
From this, we claim that
 \begin{equation}\label{claim_1}
   \mathbb{P}\left(\sup_y\left|F_n^{-1}(y) - F^{-1}(y)\right| \geq \frac{t}{\lambda}\right) \leq \mathbb{P}\left(\sup_x\left|F_n(x) - F(x)\right| \geq t\right)
 \end{equation}
which implies that
\begin{equation*}
  \mathbb{P}\left(\sup_y\left|F_n^{-1}(y) - F^{-1}(y)\right| \geq \frac{t}{\lambda}\right) \leq 2\exp(-2nt^2).
\end{equation*}

To prove \eqref{claim_1}, it suffices to show that $\sup_x \left|F_n(x) - F(x)\right| \leq t$ implies $\sup_y \left|F_n^{-1}(y) - F^{-1}(y)\right| \leq \frac{t}{\lambda}$. To this end,
fix $ y\in [0,1]$. Let $x_1 = F_n^{-1}(y) $ and $x_2 = F^{-1}(y) $. Then from the fact that the density function $\rho(x)$ has a lower bound $\lambda$ we know that
\begin{equation*}
  \frac{\left|F(x_1) - F(x_2)\right|}{|x_1 - x_2|} \geq \lambda.
\end{equation*}
It follows that
\begin{equation*}
  \lambda  |x_1 - x_2| \leq \left|F(x_1) - F(x_2)\right| = \left|F(x_1) - F_n(x_1)\right| \leq t,
\end{equation*}
where the last inequality is obtained from $\sup_x \left|F_n(x) - F(x)\right| \leq t$. Therefore, for any $ y$,
\begin{equation*}
  \left|F_n^{-1}(y) - F^{-1}(y)\right| \leq \frac{t}{\lambda}.
\end{equation*}
which completes the proof of \eqref{claim_1}. It follows from \eqref{claim_1} and Lemma \ref{lemma_quan} that
\begin{equation*}
     \mathbb{P}\left(W_{\infty}(\nu,\nu_n) \geq \frac{t}{\lambda}\right) \leq 2\exp(-2nt^2).
\end{equation*}

By taking $t =  \left(\frac{\log(2M)}{2n}\right)^{\frac12} $ we get that except on a set with probability $\frac{1}{M}$,
\begin{equation*}
  W_{\infty}(\nu,\nu_n)\leq \frac{1}{\lambda} \cdot \left(\frac{\log(2M)}{2n}\right)^{\frac12}.
\end{equation*}

\end{proof}

\bigskip

\subsection{Proof of Theorem \ref{w_2}}

\begin{lemma}\label{1lamma}
  If $a>b>0$, then $a^k -b^k \geq (a-b)^k$.
\end{lemma}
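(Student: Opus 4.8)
The statement to prove is Lemma~\ref{1lamma}: if $a > b > 0$ and $k$ is a positive integer, then $a^k - b^k \geq (a-b)^k$.

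\textbf{Plan of proof.} The inequality is trivial for $k = 1$ (equality holds), so assume $k \geq 2$. The cleanest route is to write $a = (a - b) + b$ and expand $a^k$ by the binomial theorem: $a^k = \sum_{j=0}^{k} \binom{k}{j}(a-b)^j b^{k-j}$. Since $a > b > 0$, we have $a - b > 0$ and $b > 0$, so every term in this sum is nonnegative. Isolating the $j = k$ term gives $a^k = (a-b)^k + \sum_{j=0}^{k-1}\binom{k}{j}(a-b)^j b^{k-j} \geq (a-b)^k + b^k$, because the remaining sum contains, among its nonnegative terms, the $j = 0$ term $\binom{k}{0}b^k = b^k$. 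Rearranging yields $a^k - b^k \geq (a-b)^k$, which is exactly the claim.

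\textbf{Alternative route.} One could also argue by factoring: $a^k - b^k = (a - b)\sum_{i=0}^{k-1} a^{i} b^{k-1-i}$, and since $a > b > 0$ each summand satisfies $a^i b^{k-1-i} \geq b^i b^{k-1-i} = b^{k-1}$, so the sum is at least $k\, b^{k-1}$; but this gives $a^k - b^k \geq k(a-b)b^{k-1}$, which is not directly $(a-b)^k$ unless one compares $k b^{k-1}$ with $(a-b)^{k-1}$, and that comparison can fail when $a - b$ is large. So the factoring approach needs more care (one would instead bound $a^i b^{k-1-i} \geq (a-b)^{k-1}$ only for the top term $i = k-1$, recovering the binomial argument in disguise). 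For this reason I would present the binomial expansion, which is the most transparent and leaves no cases to check.

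\textbf{Main obstacle.} There is essentially no obstacle here — the lemma is elementary. The only thing to be slightly careful about is not over-claiming: the argument uses $b > 0$ (not merely $b \geq 0$) nowhere essentially — in fact $b \geq 0$ suffices — but the hypothesis $a > b$ (strictly) is not needed either; $a \geq b \geq 0$ would do, with the convention that the inequality is non-strict. Since the paper only invokes the lemma with $a > b > 0$, I will simply carry out the binomial expansion under the stated hypotheses and conclude, noting that all discarded terms are nonnegative. The whole proof is two or three lines.
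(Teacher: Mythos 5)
Your proof is correct, but it takes a different route from the paper's. You expand $a^k = ((a-b)+b)^k$ by the binomial theorem and observe that, since $a-b>0$ and $b>0$, all terms are nonnegative, so keeping only the $j=k$ term $(a-b)^k$ and the $j=0$ term $b^k$ gives $a^k \geq (a-b)^k + b^k$ at once. The paper instead argues by induction on $k$: assuming $a^k - b^k \geq (a-b)^k$, it writes $(a-b)^{k+1} = (a-b)(a-b)^k \leq (a-b)(a^k-b^k) = a^{k+1}+b^{k+1}-ab^k-ba^k$ and then uses $2b^{k+1} \leq ab^k + ba^k$ (which follows from $b<a$ and $b^k<a^k$) to conclude. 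Your binomial argument is shorter, avoids the induction entirely, and in fact yields the slightly stronger inequality $a^k \geq (a-b)^k + b^k$; the paper's induction is equally elementary but requires the small auxiliary estimate $2b^{k+1}\le ab^k+ba^k$ in the inductive step. Your closing remark that the hypotheses could be relaxed to $a \geq b \geq 0$ is also accurate, though immaterial to how the lemma is used.
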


\begin{proof}
  By induction, we only need to prove that $a^k -b^k \geq (a-b)^k$ implies $a^{k+1} -b^{k+1} \geq (a-b)^{k+1}$.
  From $a>b>0$ we know $2b^{k+1} \leq ab^k +ba^k$. Therefore,
  \begin{equation}
      (a-b)^{k+1} = (a-b)(a-b)^k \leq (a-b)(a^k -b^k) = a^{k+1} +b^{k+1} - ab^k- ba^k \leq a^{k+1} - b^{k+1}.
  \end{equation}
\end{proof}

In Theorem \ref{w_2}, we give the rate of convergence of $W_{\infty}(\nu_n,\nu)$ when the density function $\rho(x)$ is not strictly bounded away from zero. The proof is a refinement of the proof of  \cite[Theorem 1.1]{trillos2014rate}, which deals with the case where $\rho(x)$ is bounded. We sketch the rough idea of our proof in the followings before we give the details.

To prove the theorem, we would like to use Lemma  \ref{infiniteW}-(4) to reduce the estimate of $W_{\infty}(\nu,\nu_n)$ to that of $W_{\infty}(\nu |_{B_i},\nu_n|_{B_i})$, where $B_i$ is a small neighborhood of the zero point $x_i$ . For doing so, we
 need to modify the measure $\nu$ locally (denote the new measure to be $\tilde{\nu}$ after modification) so that $\tilde{\nu}$ has the same mass as $\nu_n$ on $B_i$.
Then, we divide $B_i$ into a family of sub-domains $\{A_j\}_{j\in \mathbb{N}}$ according to the value of $\rho(x)$ so that $\rho$ is bounded from above and below on $A_j$. Thus we can adapt similar arguments from \cite{trillos2014rate} to obtain bounds on
 $W_{\infty}(\widetilde{\nu} |_{A_j},\nu_n|_{A_j})$.  However, $\nu_n$ may not have the same mass as $\widetilde{\nu}$ on each $A_j$. So, in order to remove this mass discrepancy, we introduce another new measure $\overline{\nu}$ such that $\overline{\nu}(A_j) = \nu_n(A_j)$. At last, thanks to Lemma \ref{infiniteW}, we can establish an upper bound on $W_{\infty}(\overline{\nu}|_{B_i},\nu_n|_{B_i})$ with the estimates of  $W_{\infty}(\overline{\nu}|_{A_j},\nu_n|_{A_j})$.

\begin{proof}
   Let $B_{N+1} = (0,1)\backslash \cup_1^N B_i$. Then $\{B_i\}_{i = 1}^{N+1}$ is a partition of $D$. Let $\epsilon_i = \frac{\nu_n(B_i)}{\nu(B_i)}-1$ for $i = 1,\cdots,N+1$ and $\widetilde{\nu}$ be a probability measure defined on $D$
   \begin{equation}\label{nu:widetilde}
     d\widetilde{\nu} = \left(\sum_{i=1}^{N+1}(1+\epsilon_i) \rho(x) \mathbbm{1}_{B_i}\right)dx.
   \end{equation}
    Then it's clear that
    \begin{equation*}
      \widetilde{\nu}(B_i)=(1+\epsilon_i)\nu(B_i) = \nu_n(B_i).
    \end{equation*}
    Combining this with Lemma $\ref{infiniteW}$, we obtain that
    $$W_{\infty}(\nu,\nu_n) \leq W_\infty(\nu,\widetilde{\nu}) + W_\infty(\widetilde{\nu},\nu_n) \leq W_\infty(\nu,\widetilde{\nu}) + \max_{i = 1,\cdots,N+1} W_\infty(\widetilde{\nu}|_{B_i},\nu_n|_{B_i}).$$

   Choose $\beta>2$. To estimate $W_\infty(\widetilde{\nu}|_{B_i},\nu_n|_{B_i}) (i = 1,\cdots,N)$, we divide $B_i$ into a family of sub-domains $\{A_j\}_{j\in \mathbb{N}}$ and use scaling property to bound $W_{\infty}$ distance on each sub-domain $A_j$.

   Define $\{A_j\}_{j\in \mathbb{N}}$ by $A_0 = \{x: 1 < \rho(x) \leq \Lambda \} \bigcap B_i$ , $A_{j} =\left\{x: \frac{1}{(j+1)^{\beta}} < \rho(x) \leq \frac{1}{j^{\beta}}\right\} \bigcap B_i$ (If $A_j$ is empty, just neglect it).
   Then,$$B_i = \bigsqcup_j A_j.$$

   Let $\delta_j = \frac{\nu_n(A_j)}{\nu(A_j)}-1$ and define a measure $\overline{\nu}$ on $B_i$ by
   $$d \overline{\nu} = \sum_j \mathbbm{1}_{A_j}(1+\delta_j)\rho(x)dx.$$
    Then it's easy to see that
    \begin{equation*}
      \overline{\nu}(A_j) = (1+\delta_j)\nu(A_j) = \nu_n(A_j).
    \end{equation*}
   Again, with this and Lemma \ref{infiniteW}, we can bound $W_\infty(\widetilde{\nu}|_{B_i},\nu_n|_{B_i})(i = 1,\cdots,N)$ as follows
\begin{equation*}
  \begin{aligned}
  W_\infty(\widetilde{\nu}|_{B_i},\nu_n|_{B_i}) &\leq W_\infty(\widetilde{\nu}|_{B_i},\overline{\nu}|_{B_i}) +W_\infty(\overline{\nu}|_{B_i},\nu_n|_{B_i}) \\
&\leq W_\infty(\widetilde{\nu}|_{B_i},\overline{\nu}|_{B_i}) +\sup_j W_\infty(\overline{\nu}|_{A_j},\nu_n|_{A_j}).
  \end{aligned}
\end{equation*}
Therefore, to estimate $W_{\infty}(\nu,\nu_n)$, it suffices to estimate $W_\infty\left(\nu,\widetilde{\nu}\right)$, $W_\infty\left(\widetilde{\nu}|_{B_i},\overline{\nu}|_{B_i}\right)$, $W_\infty\left(\overline{\nu}|_{A_j},\nu_n|_{A_j}\right)$, and $ W_\infty\left(\widetilde{\nu}|_{B_{N+1}},\nu_n|_{B_{N+1}}\right)$ respectively.

\bigskip

\textbf{\emph{Step 1:}}\
We first estimate $W_\infty\left(\widetilde{\nu}|_{B_{N+1}},\nu_n|_{B_{N+1}}\right).$ It's easy to deduce, via Lemma \ref{infiniteW}, that
\begin{equation}\nonumber
\begin{aligned}
    W_{\infty}\left(\widetilde{\nu}|_{B_{N+1}},\nu_n|_{B_{N+1}}\right) &= W_{\infty}\left(\frac{1}{\widetilde{\nu}(B_{N+1})}\widetilde{\nu}|_{B_{N+1}}, \frac{1}{\widetilde{\nu}(B_{N+1})}\nu_n|_{B_{N+1}}\right)\\
  & = W_{\infty}\left(\frac{1}{\nu(B_{N+1})} \nu|_{B_{N+1}},\frac{1}{\sum\delta_{X_i}(B_{N+1})}\sum_{X_i \in B_{N+1}}\delta_{X_i}|_{B_{N+1}}\right).
\end{aligned}
\end{equation}
To ease the notations, we write $\nu_{{N+1}} = \frac{1}{\nu(B_{N+1})} \nu|_{B_{N+1}}$ and $\nu_{n,N+1} =  \frac{1}{\sum\delta_{X_i}(B_{N+1})}\sum_{X_i \in B_{N+1}}\delta_{X_i}|_{B_{N+1}}.$

Clearly, $\nu_{N+1}$ is the restriction of $\nu$ to $B_{N+1}$ and $\nu_{n,N+1}$ is the empirical measure of $\nu_{N+1}$. Furthermore, we note that $\rho(x)$ is bounded from below in $B_{N+1}$ due to the fact that $B_i(i = 1,\cdots,N)$ is a small neighborhood of zero point $x_i$ and $B_{N+1} = D\setminus\bigcup_1^N B_i$.
 Therefore, we can use Theorem \ref{w_1} to give an estimate on $W_\infty\left(\widetilde{\nu}|_{B_{N+1}},\nu_n|_{B_{N+1}}\right).$(We remark that Theorem \ref{w_1}  holds true for any domain $(a,b) \subset \R$ by replacing $D = (0,1)$ with $D = (a,b)$ in the proof.)

   Let $\lambda_{N+1} := \min_{x\in B_{N+1}} \rho(x)$. Then we have $0 < \frac{\lambda_N}{\nu(B_{N+1})} \leq\frac{1}{\nu(B_{N+1})} \rho(x)|_{B_{N+1}}$. It follows from Theorem \ref{w_1} that there exists a constant $C = \frac{\nu(B_{N+1})}{\lambda_N}$ such that
   \begin{equation*}
     W_{\infty}\left(\widetilde{\nu}|_{B_{N+1}},\nu_n|_{B_{N+1}}\right) \leq C \left(\frac{\log n}{n}\right)^{\frac12}.
   \end{equation*}

\bigskip

\textbf{\emph{Step 2:}} \
We then estimate $W_\infty\left(\overline{\nu}|_{A_j},\nu_n|_{A_j}\right)$.
To achieve this, set   $$J_0  = \left\lfloor(\frac{n}{\log n})^{\frac{k_i}{2\beta(k_i+1)}}\right\rfloor - 1$$
and consider the following two cases: 1) $j<J_0$ and 2) $j\geq J_0$.

We claim that,
when $j \geq J_0$, $\textrm{diam}(A_j) \leq C \cdot \left(\frac{\log n}{n}\right)^{\frac{1}{2(k_i+1)}}$.
To show the claim, we first recall the definition that $A_{j} =\left\{x: \frac{1}{(j+1)^{\beta}} < \rho(x) \leq \frac{1}{j^{\beta}}\right\} \bigcap B_i~$
and the assumption that
$~\underline{C_i}|x_i - x|^{k_i} \leq \rho(x) \leq \overline{C_i}|x_i - x|^{k_i}$ in $B_i$.  To simplify the notations, we denote  $\underline{C}_i|x-x_i|^{k_i}$ by $\rho_1(x)$ and $\overline{C}_i|x-x_i|^{k_i}$ by $\rho_2(x) $. Let $x_R$ and $x_L$ be positive constants satisfying $\rho_1(x_R) = \frac{1}{j^{\beta}}$, $\rho_2(x_L) = \frac{1}{(j+1)^{\beta}}.$

From $\rho_1(x) \leq \rho(x) \leq \rho_2(x)$ we know that $\textrm{diam}(A_j) \leq x_R-x_L \leq x_R.$ Moreover, when $n$ is large enough,
\begin{equation*}
 \begin{aligned}
x_R &= \frac{1}{{(\underline{C}_i)}^{\frac{1}{k_i}}} \cdot j^{-\frac{\beta}{k_i}} \leq C \cdot J_0^{-\frac{\beta}{k_i}} = C \cdot \left(\left\lfloor\left(\frac{n}{\log n}\right) ^{\frac{k_i}{2\beta(k_i+1)}} \right\rfloor - 1\right) ^{-\frac{\beta}{k_i}} \\
& \leq C \cdot \left(\frac12 \cdot \left(\frac{n}{\log n}\right)^{\frac{k_i}{2\beta(k_i+1)}} \right)^ {-\frac{\beta}{k_i}}\\
  &\leq C \cdot \left(\frac{\log n}{n}\right)^{\frac{1}{2(k_i+1)}},
\end{aligned}
\end{equation*}
where $C = C(k_i,\underline{C}_i,\beta) $ .

Therefore, when $j \geq J_0$, $\textrm{diam}(A_j) \leq C \cdot \left(\frac{\log n}{n}\right)^{\frac{1}{2(k_i+1)}} $. By Lemma \ref{infiniteW}, when $j \geq J_0$,
$$W_\infty(\overline{\nu}|_{A_j},\nu_n|_{A_j})  \leq \textrm{diam}(A_j)  \leq C \cdot \left(\frac{\log n}{n}\right)^{\frac{1}{2(k_i+1)}},$$
where  $C = C(k_i,\lambda_i,\beta)$.

We then turn to the case that $j < J_0$. We first use scaling equality $W_\infty(\overline{\nu}|_{A_j},\nu_n|_{A_j}) = W_\infty(j^{\beta}\overline{\nu}|_{A_j},j^{\beta}\nu_n|_{A_j})$. For simply notations, let $$\nu_{n,j} =j^{\beta}\nu_n|_{A_j},~~\overline{\nu}_j = j^{\beta}\overline{\nu}|_{A_j} .$$
Then the density function of $\overline{\nu}_j$ is defined by
$$\overline{\rho}_j(x) = j^{\beta} (1+\delta_j)\rho(x).$$

For every $k \in \mathbb{N}$, we partition $A_j$ into $2^k$ sub-domains. Each of them have a $\overline{\nu}_j-$mass of $\frac{1}{2^k}\overline{\nu}_j(A_j)$. Let $\mathcal{F}_{k,j}$ be the set of these sub-domains. $\mathcal{F}_{0,j} = A_j$. And $\mathcal{F}_{k+1,j}$ is obtained by bisecting each box in $\mathcal{F}_{k,j}$, according to $\overline{\nu}_{j}$. Thus, for any $Q \in \mathcal{F}_{k,j},$
$$\overline{\nu}_j(Q) = \frac{1}{2^k}\overline{\nu}_j(A_j),~~~~\nu(Q) = \frac{1}{2^k} \nu(A_j).$$

We define a series of new measures $\{\mu_{k,j}\}$ by setting $d\mu_{k,j}(x) = \rho_{k,j}(x) dx $ with
   $$\rho_{k,j}(x) = \frac{\nu_{n,j}(Q)}{\overline{\nu}_j(Q)}\cdot \overline{\rho}_j(x) = \frac{\nu_n(Q)}{\nu(Q)}j^{\beta}\rho(x),~~~~\forall~~x \in Q \in \mathcal{F}_{k,j}.$$
We claim that for $\forall Q \in \mathcal{F}_{k,j}$, $\forall ~k \leq k_n = \log_2\left(\frac{n\nu(A_j)}{10\log n }\right)$, there exists a constant $C$ such that the following inequality holds true with probability at least $1-2n^{-1}$
\begin{equation}\label{claim_2}
  W_\infty(\mu_{k,j}|_Q,\mu_{k+1,j}|_Q)\leq C \cdot (j+1)^{\beta} \cdot \left(\frac{\nu(A_j) \log n}{2^k n}\right)^{\frac12}.
\end{equation}

Assume that the claim holds.  Note that $\textrm{diam}(Q) = \int_Q dx \leq \int_Q (j+1)^{\beta}\rho(x)dx = (j+1)^{\beta}\nu(Q)$. Then for $j = 1,\cdots,J_0-1$, we have
   \begin{equation}\nonumber
     \begin{aligned}
       W_\infty(\nu_{n}|_{A_j},\overline{\nu}|_{A_j})  = W_{\infty}(\nu_{n,j},\overline{\nu_j}) &\leq \sum^{k_n}_{k = 1} W_\infty(\mu_{k-1,j},\mu_{k,j}) + W_\infty(\mu_{k_n,j},\nu_{n,j})\\
       & \leq \sum^{k_n}_{k=1}\bigg(C \cdot (j+1)^{\beta}\left(\frac{\nu(A_j) \log n}{2^k n }\right)^{\frac12}\bigg) + \max_{Q\in \mathcal{F}_k} \textrm{diam}(Q)\\
       & \leq \bigg(\sum^{k_n}_{k=1}\bigg(C \cdot \left(\frac{\nu(A_j) \log n}{2^k n }\right)^{\frac12}\bigg) + \frac{1}{2^{k_n}}\cdot \nu(A_j)\bigg) \cdot (j+1)^{\beta}\\
       &\leq C \left(\left(\frac{\log n}{n}\right)^{\frac12} + C \frac{\log n}{n}\right) (J_0+1)^{\beta}\\
       &= C \cdot \left(\frac{\log n}{n}\right)^{\frac12}\cdot \left\lfloor(\frac{n}{\log n})^{\frac{k_i}{2\beta(k_i+1)}}\right\rfloor^{\beta}\\
       & \leq C \left (\frac{\log n}{n}\right)^{\frac{1}{2(k_i +1)}}.
     \end{aligned}
   \end{equation}
Therefore, for $\forall j \in \mathbb{N}$,
$$W_\infty(\nu_{n}|_{A_j},\overline{\nu}|_{A_j})\leq C \left (\frac{\log n}{n}\right)^{\frac{1}{2(k_i +1)}}$$
where $C$ depends on $\beta, k_i, \lambda_i.$

Now we return to the proof of the claim (\ref{claim_2}). Actually, from the definition of $\mu_{k,j}$ and $\rho_{k,j}$ it follows that for $\forall~~x \in Q \in \mathcal{F}_{k,j},$
   $$\frac{\nu_n(Q)}{\nu(Q)}\cdot \frac{j^{\beta}}{(j+1)^{\beta}} \leq\rho_{k,j}(x)\leq \frac{\nu_n(Q)}{\nu(Q)},~~ $$
    and
   $$\mu_{k,j}(Q) = \mu_{k+1,j}(Q) = \nu_{n,j}(Q).$$

   Therefore, by Lemma \ref{infiniteW} we know that
   $$W_\infty(\mu_{k+1,j},\mu_{k,j})\leq \max_{Q\in \mathcal{F}_{k,j}}W_\infty(\mu_{k+1,j}|_Q,\mu_{k,j}|_Q).$$
   Let $Q_1$ be a sub-domain bisected from $Q$. Then $ Q_1 \in \mathcal{F}_{k+1,j}$. According to Lemma \ref{inf_den},
   \begin{equation*}
     \begin{aligned}
       W_\infty(\mu_{k+1,j}|_Q,\mu_{k,j}|_Q)  &= W_\infty\left(\frac{\nu(Q)}{\nu_n(Q)}\mu_{k+1,j}|_Q,\frac{\nu(Q)}{\nu_n(Q)}\mu_{k,j}|_Q\right)\\
         &\leq \frac{C}{\rho_{min}} \cdot \textrm{diam}(Q) \cdot  \left|\frac{\nu_n(Q_1)\nu(Q)}{\nu(Q_1)\nu_n(Q)} -1 \right| \cdot \left\|j^{\beta}\rho(x)\right\|_{L^\infty(Q)}\\
         & = \frac{C}{\rho_{min}} \cdot \textrm{diam}(Q) \cdot \left |\frac{2\nu_n(Q_1)}{\nu_n(Q)} -1 \right| \cdot \left\|j^{\beta}\rho(x)\right\|_{L^\infty(Q)}\\
         & \leq \frac{C}{\rho_{min}} \cdot \textrm{diam}(Q) \cdot  \left|\frac{2\nu_n(Q_1)}{\nu_n(Q)} -1\right |,
     \end{aligned}
   \end{equation*}
   where
   $$\rho_{min} = \frac{{j^{\beta}}}{(j+1)^{\beta}} \min\left\{1, \frac{\nu_n(Q_1)\nu(Q)}{\nu(Q_1)\nu_n(Q)}\right\} =\frac{{j^{\beta}}}{(j+1)^{\beta}} \min\left\{1, \frac{2\nu_n(Q_1)}{\nu_n(Q)}\right\} . $$

   To bound $W_\infty(\mu_{k+1,j}|_Q,\mu_{k,j}|_Q)$ , it suffices to estimate $\frac{1}{\rho_{min}}$ and $\left|\frac{2\nu_n(Q_1)}{\nu_n(Q)} -1 \right|$ respectively.  We first give a probabilistic estimate on $\frac{1}{\rho_{min}}$.

   Note that for $\forall Q \in \mathcal{F}_{k,j} $, $\frac{ n\nu_{n,j}(Q)}{j^{\beta}} \sim Bin(n,\nu(Q))$. Thus, we can use Bernstein's inequality and deduce that for $\forall k \leq k_n = \log_2\left(\frac{n\nu(A_j)}{10 \log n }\right)$,
     \begin{equation*}
       \begin{aligned}
        \mathbb{P}\Big(|\frac{\nu_{n,j}(Q)}{j^{\beta}} - \nu(Q)| \geq \frac12 \nu(Q)\Big) & \leq 2 \cdot \exp\Bigg(-\frac{\frac12 n\left(\frac{\nu(Q)}{2}\right)^2}{\nu(Q)\left(1-\nu(Q)\right)+\frac13\cdot\frac12\nu(Q)}\Bigg)\\
         &\leq 2 \cdot \exp\left(- \frac{1}{10}\cdot n \cdot \nu(Q)\right)\\
         & = 2 \cdot \exp\left(- \frac{1}{10}\cdot n \cdot \frac{1}{2^k}\nu(A_j)\right)\\
         & \leq  2n^{-1}.
       \end{aligned}
     \end{equation*}
     That is, with probability at least $1 - 2n^{-1}$,
     \begin{equation}\nonumber
       \left|\frac{1}{j^{\beta}}\cdot\frac{\nu_{n,j}(Q)}{\nu(Q)} - 1 \right| \leq \frac12.
     \end{equation}

     From the definition of $\nu_{k,j}$ we know
     \begin{equation}\label{geq12_2}
       \frac32 \geq \frac{\nu_{n}(Q)}{\nu(Q)} \geq \frac12,~~\frac32 \geq \frac{\nu_{n}(Q_1)}{\nu(Q_1)} \geq \frac12.
     \end{equation}
     Therefore,
     $$\frac{2\nu_n(Q_1)}{\nu_n(Q)} \geq \frac{\nu (Q_1)}{\nu_n(Q)} = \frac12 \cdot \frac{\nu (Q)}{\nu_n(Q)} \geq \frac13,$$
     and $\frac{1}{\rho_{min}}$ can be bounded with probability at least $1 - 2n^{-1}$
     $$ \frac{1}{\rho_{\min}} = \frac{1}{\frac{j^{\beta}}{(j+1)^{\beta}} \min\left\{1, \frac{2\nu_n(Q_1)}{\nu_n(Q)}\right\}} \leq \frac{3 (j+1)^{\beta}}{j^{\beta}} \leq 3\cdot 3 (1+\beta). $$

   We then estimate  $\left|\frac{2\nu_n(Q_1)}{\nu_n(Q)} -1 \right|$.

     Notice that if we set $m = n\cdot \frac{1}{j^{\beta}}\cdot \nu_{n,j}(Q)$,  then
     $$m\cdot \frac{\nu_{n,j}(Q_1)}{\nu_{n,j}(Q)} = \sum_1^n \delta_{X_i}(Q_1) \sim Bin\left(m,\frac{\nu(Q_1)}{\nu(Q)}\right) = Bin\left(m,\frac12\right).$$ Using Chernoff's inequality we get that
     \begin{equation}\nonumber
       \begin{aligned}
         \mathbb{P}\left(\left|\frac{\nu_{n,j}(Q_1)}{\nu_{n,j}(Q)} - \frac12\right| \geq \left(\frac{ 2^k \log n}{n \nu(A_j)}\right)^{\frac12}\right)
         & \leq 2\exp\left(-2\cdot \frac{m2^{k}\log n}{n\nu(A_j)} \right)\\
         & \leq 2\exp(-\log n)\\
         & =  2 n^{-1},
       \end{aligned}
     \end{equation}
     where the last inequality is obtained from (\ref{geq12_2}). Therefore, with probability at least $1-2n^{-1}$,
     $$\left|2\frac{\nu_{n}(Q_1)}{\nu_{n}(Q)} - 1\right| = \left|2\frac{\nu_{n,j}(Q_1)}{\nu_{n,j}(Q)} - 1\right|\leq 2 \left(\frac{2^k \log n}{n \nu(A_j)}\right)^{\frac12}.$$

   Finally, using the fact that $\textrm{diam}(Q) = \int_Q dx \leq  (j+1)^{\beta}\nu(Q),$ we know that with probability at least $1-2n^{-1}$,
   \begin{equation}\nonumber
     \begin{aligned}
       W_\infty(\mu_{k,j}|_Q,\mu_{k+1,j}|_Q) &\leq C \cdot \nu(Q) \left(\frac{ 2^k \log n}{n \nu(A_j)}\right)^{\frac12} \cdot(j+1)^{\beta} \\
       & \leq C \cdot (j+1)^{\beta} \cdot \left(\frac{\nu(A_j) \log n}{2^k n }\right)^{\frac12},
     \end{aligned}
   \end{equation}
which completes the proof of claim (\ref{claim_2}).

\bigskip

\textbf{\emph{Step 3:}} \
We then estimate $W_\infty(\overline{\nu}|_{B_i}, \widetilde{\nu}|_{B_i})$.
We first recall that $B_i = (x_i- \Delta_i,x_i+ \Delta_i)$ and
$$ d\widetilde{\nu}|_{B_i} = (1+\epsilon_i) \rho(x)dx~~,~~~~~~~~~d \overline{\nu}|_{B_i} = \sum_j \mathbbm{1}_{A_j}(1+\delta_j)\rho(x)dx,$$
where $\epsilon_i = \frac{\nu_n(B_i)}{\nu(B_i)}-1$  and $\delta_j = \frac{\nu_n(A_j)}{\nu(A_j)}-1$.
Let $T$ be the transportation map between $\overline{\nu}|_{B_i}$ and $ \widetilde{\nu}|_{B_i}$. Thus for any $x \in B_i$ and $y = Tx$,
$$\int^y_{x_i- \Delta_i} \widetilde{\rho}(s) ds = \int^x_{x_i- \Delta_i} \overline{{\rho}}(s) ds.$$
Without loss of generality, we assume $y>x$. Then
   \begin{equation}\label{est_rho_ba}
 \int^y_x \widetilde{\rho}(s) ds = \int^y_{x_i-\Delta_i} (\overline{\rho}(s) -\widetilde{\rho}(s)) ds \leq \int^y_{x_i-\Delta_i}|\widetilde{\rho}(s) - \overline{\rho}(s)| ds \leq \sum_j |\epsilon_i - \delta_j|\nu(A_j).
   \end{equation}

Let $S_n := n\nu_n(A_j)$. Then $S_n = \sum_{i=1}^{n} \delta_{X_i}(A_j) \sim Bin(n,\nu(A_j))$. According to Chebychev's inequality we know that
   \begin{equation}\nonumber
    \mathbb{P}\left(\frac{|S_n - n\cdot \nu(A_j)|}{\sqrt{n\nu(A_j)(1-\nu(A_j))}}  \geq \sqrt{\log n} \right) \leq (\log n)^{-1},
  \end{equation}
   which means that with probability at least $1- (\log n)^{-1}$,
   \begin{equation}\nonumber
     \frac{|n\nu_n(A_j) - n\cdot \nu(A_j)|}{\sqrt{n\nu(A_j)(1-\nu(A_j))}}  \leq \sqrt{\log n}.
   \end{equation}
  Then by the definition of $\delta_j$ we know that with probability at least $1- (\log n)^{-1}$,
   $$|\delta_j\nu(A_j)| \leq\Big(\frac{\log n \cdot \nu(A_j)(1-\nu(A_j))}{n}\Big)^{\frac12}.$$
  With a similar method we derive that with probability at least $1- (\log n)^{-1}$,
  \begin{align*}
    |\ep_i\nu(B_i)| \leq\Big(\frac{\log n \cdot \nu(B_i)(1-\nu(B_i))}{n}\Big)^{\frac12}.
  \end{align*}

   Note that in $A_j$, $\rho(s) \leq \frac{1}{j^{\beta}}$, which implies $\nu(A_j) = \int_{A_j}  \rho(s) ds \leq \int_{A_j}  \frac{1}{j^\beta} ds \leq \frac{1}{j^\beta}$. Therefore,
$$\sum_j |\delta_j|\nu(A_j) \leq \sum_j \left(\nu(A_j)\right)^{\frac12}\left(\frac{\log n}{n}\right)^{\frac12} \leq \sum_j \frac{1}{j^{\frac{\beta}{2}}}\left(\frac{\log n}{n}\right)^{\frac12} \leq C\cdot \left(\frac{\log n}{n}\right)^{\frac12}.$$
From the fact that $B_i = \bigsqcup_j A_j$ we know
   $$\sum_j |\epsilon_i| \nu(A_j) =  |\epsilon_i| \nu(B_i) \leq \Big(\frac{\log n \cdot \nu(B_i)(1-\nu(B_i))}{n}\Big)^{\frac12} \leq C\cdot \left(\frac{\log n}{n}\right)^{\frac12}.$$
Therefore from (\ref{est_rho_ba}) we derive that with probability at least $1- (\log n)^{-1}$,
  $$\int^y_x \widetilde{\rho}(s) ds \leq \sum_j |\epsilon_i - \delta_j|\nu(A_j) \leq \sum_j \big(|\epsilon_i | + |\delta_j|\big)\nu(A_j) \leq  C\cdot \left(\frac{\log n}{n}\right)^{\frac12}.$$

   Since in $B_i$, $\widetilde{\rho}(s) \geq (1+\epsilon_i)\underline{C}_i|x_i - s|^{k_i}$,  it follows that $\int^y_x \widetilde{\rho}(s) ds$ can be bounded from below in the following two cases respectively
\begin{equation}\nonumber
  \int^y_x \widetilde{\rho}(s) ds \geq
  \left\{
  \begin{aligned}
    &(1+\epsilon_i) \underline{C}_i \left[(x_i-x)^{k_i+1} +(y - x_i)^{k_i+1}\right] ,\quad &x_i \in (x,y),\\
    &(1+\epsilon_i) \underline{C}_i \left [(y-x)^{k_i+1}\right],\quad & x_i \notin (x,y).\\
  \end{aligned}
  \right.
\end{equation}
   The results are obtained by direct calculations so the proof is omitted here.
   In both cases, we can derive by lemma \ref{1lamma} that
   \begin{equation*}
   \begin{aligned}
      |y-x|
      & \leq C \bigg\{\bigg(\frac{ \left(\frac{\log n}{n}\right)^{\frac12}}{(1+\epsilon_i)\underline{C}_i }\bigg)^{\frac{1}{k_{i}+1}} \bigg\}.
   \end{aligned}
   \end{equation*}
   Therefore,
   $$W_\infty(\overline{\nu}|_{B_i}, \widetilde{\nu}|_{B_i}) \leq ||T-I||_{L^{\infty}(B_i)} \leq \max_{x\in B_i}|y-x| \leq C \left(\frac{\log n}{n}\right)^{\frac{1}{2(k_i+1)}},$$
where $C$ depends on $\ep_i , \underline{C}_i$ and $k_i$.

\bigskip

\textbf{\emph{Step 4:}} Finally, for $W_\infty(\nu,\widetilde{\nu})$, we use the same method as step 3 and deduce that
$$W_\infty(\nu,\widetilde{\nu}) \leq C \cdot\left( \frac{\log n}{n}\right)^{\frac12},$$
where $C $ depends on $\ep_i, k_i, \underline{C}_i$.

To sum up, with step 1-4, we know that
\begin{equation}
  \begin{aligned}
  W_\infty(\nu,\nu_n)&\leq W_\infty(\nu,\widetilde{\nu}) + \max \left\{ \max_{i=1,\cdots,N}\big[W_\infty(\widetilde{\nu}|_{B_i},\overline{\nu}|_{B_i}) +\sup_j W_\infty(\overline{\nu}|_{A_j},\nu_n|_{A_j})\big], W_\infty(\widetilde{\nu}|_{B_{N+1}},\nu_n|_{B_{N+1}})\right\}\\  &\leq C\cdot \max_i \left(\frac{\log n}{n}\right)^{\frac{1}{2(k_i+1)}}.\notag
\end{aligned}
\end{equation}
where $C$ depends on $ k_i $ and $\underline{C}_i$.
This completes the proof of Theorem \ref{w_2}.
\end{proof}

\bigskip

\subsection{Proof of Corollary \ref{w_3}}

\begin{proof}
Let $A = \{x: \rho(x) < 1 \}$, $B = \{x: \rho(x) \geq 1\}$ and assume that they both are connected sets( otherwise we can divide them into connected sets).

   Define a probability measure on $D$ by $ d \widetilde{\nu} = \left((1+\epsilon_A) \mathbbm{1}_A \rho(x) + (1+\epsilon_B) \mathbbm{1}_B \rho(x)\right) dx$, where
   $$\epsilon_A = \frac{\nu_n(A)}{\nu(A)} - 1, ~~  ~~\epsilon_B = \frac{\nu_n(B)}{\nu(B)} - 1.$$
Thus, it's easy to see that
\begin{equation}\label{cor_nu}
  \widetilde{\nu}(A) = \nu_n(A) ~~~~\textrm{and} ~~~~~\widetilde{\nu}(B) = \nu_n(B).
\end{equation}

In order to estimate $W_{\infty}(\nu,\nu_n)$, it suffices to estimate $W_{\infty}(\nu, \widetilde{\nu})$ and $W_{\infty}(\widetilde{\nu}, \nu_n)$ respectively.

\bigskip

\textbf{\emph{Step 1:}} \
We first estimate $W_\infty(\nu_n,\widetilde{\nu})$. Using Lemma \ref{infiniteW} and (\ref{cor_nu}) we know that
   \begin{equation}\nonumber
     \begin{aligned}
       W_{\infty}(\widetilde{\nu}, \nu_n) & \leq  \max\left\{W_{\infty}\left(\widetilde{\nu}|_A, \nu_n|_A\right),W_{\infty}\left(\widetilde{\nu}|_B, \nu_n|_B\right)\right\} \\
       & = \max\left\{W_{\infty}\left(\frac{1}{\widetilde{\nu}(A)}\widetilde{\nu}|_A, \frac{1}{\widetilde{\nu}(A)}\nu_n|_A\right),W_{\infty}\left(\frac{1}{\widetilde{\nu}(B)}\widetilde{\nu}|_B, \frac{1}{\widetilde{\nu}(B)}\nu_n|_B\right)\right\}.
     \end{aligned}
   \end{equation}

Note that $$\frac{1}{\widetilde{\nu}(A)}\widetilde{\nu}|_A = \frac{1}{\nu(A)}\nu|_A$$ and
   $$\frac{1}{\widetilde{\nu}(A)}\nu_n|_A = \frac{1}{n\widetilde{\nu}(A)} \sum^n_{i = 1} \delta_{X_i}|_A =  \frac{1}{n \nu_n(A)} \sum^n_{i = 1} \delta_{X_i}|_A = \frac{1}{\sum^n_{i = 1} \delta_{X_i}(A)}\sum_{X_i\in A} \delta_{X_i}|_A.$$
   Therefore, $\frac{1}{\widetilde{\nu}(A)}\nu_n|_A$ is the empirical measure of $\frac{1}{\widetilde{\nu}(A)}\widetilde{\nu}|_A$. By Theorem \ref{w_1} we know that
   $W_{\infty}(\widetilde{\nu}|_A, \nu_n|_A)\leq C \cdot \left(\frac{\log n}{n}\right)^{\frac12} $.

   Similarly, we can deduce that $W_{\infty}(\widetilde{\nu}|_B, \nu_n|_B)\leq C\cdot\max_i \left(\frac{\log n }{n}\right)^{\frac{1}{2(k_i+1)}}$. Therefore,
   $$W_{\infty}(\widetilde{\nu},\nu_n)\leq C\cdot\max_i \left(\frac{\log n }{n}\right)^{\frac{1}{2(k_i+1)}}. $$

\bigskip

\textbf{\emph{Step 2:}} \
We then estimate $W_\infty(\widetilde{\nu}, \nu)$.

   Let T be the transportation map between  $\widetilde{\nu}$ and $\nu.$ Then for $\forall x \in D$ and $y = Tx $,
   $$\int^x_{0} \widetilde{\rho}(s) ds = \int^y_{0} {\rho}(s) ds.$$
   Without loss of generality, we assume $y>x$. Then it follows that
   $$
     \int^y_x \rho(s) ds = \int^x_{0} \widetilde{\rho}(s) - \rho(s) ds  \leq \int^x_{0}|\widetilde{\rho}(s) - \rho(s)| ds  \leq |\epsilon_A|\nu(A) + |\epsilon_B|\nu(B).
   $$
   By Chebychev's inequality we know that with probability at least $1 - (\log n)^{-1}$,
   \begin{align*}
     |\epsilon_A|\nu(A) + |\epsilon_B|\nu(B) \leq C \left(\frac{\log n}{n}\right)^{\frac12}.
   \end{align*}
    Thus,
   $$\int_{(x,y)\cap A} \rho(s)ds + \int_{(x,y)\cap B} \rho(s)ds \leq C \cdot \left(\frac{\log n}{n}\right)^{\frac12}.$$

   By the same method in the proof of Theorem  \ref{w_2}, we can give a lower bound on $\int_{(x,y)\cap A} \rho(s)ds$ and $\int_{(x,y)\cap B} \rho(s)ds$ respectively and conclude that with probability at least $1 - (\log n)^{-1}$,
   $$W_\infty(\nu, \nu_n) \leq C \cdot \max_i \left(\frac{\log n }{n}\right)^{\frac{1}{2(k_i+1)}}.$$
This completes the proof of Corollary \ref{w_3}.
\end{proof}

\bigskip

\begin{rem}
We showed the rate of convergence of $\nu_n$ to $\nu$ when the density function $\rho(x)$ is unbounded in one dimension. We expect that similar results also hold to be true in high dimensions.
However, the idea of the proof needs to be adapted. In particular, the estimate of  $W_{\infty}(\widetilde{\nu},\nu )$
becomes quite technical in high dimensions, where $\tilde{\nu}$ is an auxiliary measure introduced in \eqref{nu:widetilde} for the purpose of removing the mass discrepancy between $\nu$ and $\nu_n$ in local regions. In fact, in one dimension we estimate $W_{\infty}(\nu,\widetilde{\nu})$ by using that
\begin{equation*}
  W_\infty(\nu,\widetilde{\nu}) \leq ||T-I||_{L^{\infty}} \leq \max_{x\in D}|y-x| \leq \frac{1}{\lambda} \int^y_x \rho(s)ds,
\end{equation*}
where $T$ is the transportation map between $\widetilde{\nu}$ and $\nu$ and $y = Tx$.  In high dimensions, it is not clear to us how to bound $W_{\infty}(\nu,\widetilde{\nu})$ in terms of certain integral of the density. This is to be investigated in our future work.
\end{rem}

\section*{Acknowledgements}
The research was partially supported by KI-Net NSF RNMS11-07444 and NSF DMS-1812573.
The authors would like to thank Johannes Wiesel for suggesting the use of the Dvoretzky-Kiefer-Wolfowitz inequality to prove Theorem \ref{w_1}, which simplifies an earlier version of the proof of the theorem.

\printindex

\newpage

\bibliographystyle{amsplain}
\bibliography{bibfile}

\providecommand{\bysame}{\leavevmode\hbox to3em{\hrulefill}\thinspace}
\providecommand{\MR}{\relax\ifhmode\unskip\space\fi MR }
\providecommand{\MRhref}[2]{%
  \href{http://www.ams.org/mathscinet-getitem?mr=#1}{#2}
}
\providecommand{\href}[2]{#2}
\begin{thebibliography}{10}

\bibitem{Ajtai1984}
M.~Ajtai, J.~Koml{\'o}s, and G.~Tusn{\'a}dy, \emph{On optimal matchings},
  Combinatorica \textbf{4} (1984), no.~4, 259--264.

\bibitem{barthe2013combinatorial}
Franck Barthe and Charles Bordenave, \emph{Combinatorial optimization over two
  random point sets}, S{\'e}minaire de Probabilit{\'e}s XLV, Springer, 2013,
  pp.~483--535.

\bibitem{Bernstein1927}
S.N. Bernstein, \emph{The theory of probabilities}, Gastehizdat Publishing
  House,Moscow, 1946.

\bibitem{boissard2011simple}
Emmanuel Boissard et~al., \emph{Simple bounds for the convergence of empirical
  and occupation measures in 1-wasserstein distance}, Electronic Journal of
  Probability \textbf{16} (2011), 2296--2333.

\bibitem{boissard2014mean}
Emmanuel Boissard, Thibaut Le~Gouic, et~al., \emph{On the mean speed of
  convergence of empirical and occupation measures in wasserstein distance},
  Annales de l'Institut Henri Poincar{\'e}, Probabilit{\'e}s et Statistiques,
  vol.~50, Institut Henri Poincar{\'e}, 2014, pp.~539--563.

\bibitem{bolley2007quantitative}
Fran{\c{c}}ois Bolley, Arnaud Guillin, and C{\'e}dric Villani,
  \emph{Quantitative concentration inequalities for empirical measures on
  non-compact spaces}, Probability Theory and Related Fields \textbf{137}
  (2007), no.~3-4, 541--593.

\bibitem{champion2008wasserstein}
Thierry Champion, Luigi De~Pascale, and Petri Juutinen, \emph{The
  $\infty$-wasserstein distance: Local solutions and existence of optimal
  transport maps}, SIAM Journal on Mathematical Analysis \textbf{40} (2008),
  no.~1, 1--20.

\bibitem{Chernoff_1952}
Herman Chernoff, \emph{A measure of asymptotic efficiency for tests of a
  hypothesis based on the sum of observations}, The Annals of Mathematical
  Statistics \textbf{23} (1952), no.~4, 493--507.

\bibitem{d2006using}
Michele d'Amico, Patrizio Frosini, and Claudia Landi, \emph{Using matching
  distance in size theory: A survey}, International Journal of Imaging Systems
  and Technology \textbf{16} (2006), no.~5, 154--161.

\bibitem{dudley1969speed}
RM~Dudley, \emph{The speed of mean glivenko-cantelli convergence}, The Annals
  of Mathematical Statistics \textbf{40} (1969), no.~1, 40--50.

\bibitem{fournier2015rate}
Nicolas Fournier and Arnaud Guillin, \emph{On the rate of convergence in
  wasserstein distance of the empirical measure}, Probability Theory and
  Related Fields \textbf{162} (2015), no.~3-4, 707--738.

\bibitem{gibbs2002choosing}
Alison~L Gibbs and Francis~Edward Su, \emph{On choosing and bounding
  probability metrics}, International statistical review \textbf{70} (2002),
  no.~3, 419--435.

\bibitem{Leighton1989}
T.~Leighton and P.~Shor, \emph{Tight bounds for minimax grid matching with
  applications to the average case analysis of algorithms}, Combinatorica
  \textbf{9} (1989), no.~2, 161--187.

\bibitem{shor1991}
P.~W. Shor and J.~E. Yukich, \emph{Minimax grid matching and empirical
  measures}, Ann. Probab. \textbf{19} (1991), no.~3, 1338--1348.

\bibitem{chebichef1867}
P.~Tchebichef, \emph{Des valeurs moyennes}, Journal de math¨¦matiques pures et
  appliqu¨¦es \textbf{12} (1867), no.~2, 177--184.

\bibitem{trillos2016variational}
Nicol{\'a}s~Garc{\'\i}a Trillos and Dejan Slep{\v{c}}ev, \emph{A variational
  approach to the consistency of spectral clustering}, Applied and
  Computational Harmonic Analysis (2016).

\bibitem{trillos2014rate}
Nicol{\'a}s~Garcia Trillos and Dejan Slep\v{c}ev, \emph{On the rate of
  convergence of empirical measures in $\infty$-transportation distance},
  Canadian Journal of Mathematics \textbf{67} (2014), 1358.

\bibitem{van1996weak}
Aad~W Van Der~Vaart and Jon~A Wellner, \emph{Weak convergence}, Weak
  convergence and empirical processes, Springer, 1996, pp.~16--28.

\bibitem{von2007tutorial}
Ulrike Von~Luxburg, \emph{A tutorial on spectral clustering}, Statistics and
  computing \textbf{17} (2007), no.~4, 395--416.

\end{thebibliography}

\end{document}